\newcommand{\ee}{\mathbb{E}}
\newcommand{\pp}{\mathbb{P}}
\newtheorem{thm}{Theorem}[section]
\newtheorem{theorem}[thm]{Theorem}
\newtheorem{lemma}[thm]{Lemma}
\date{}
\def\ba#1{\begin{align*}#1\end{align*}}
\def\ban#1{\begin{align}#1\end{align}}
\newcommand{\IE}{\ee}
\newcommand{\IP}{\pp}
\def\norm#1{\Vert#1\Vert}
\def\abs#1{|#1|}
\newcommand{\eq}{\eqref}
\newcommand{\hh}{\tilde{h}}
\begin{document}

\title [Exponential approximation and Stein's method] {Exponential approximation and Stein's method of exchangeable pairs}

\author{Jason Fulman}
\address{Department of Mathematics\\
        University of Southern California\\
        Los Angeles, CA, 90089, USA}
\email{fulman@usc.edu}

\author{Nathan Ross}
\address{Department of Statistics\\
        University of California, Berkeley\\
        Berkeley, CA, 94720, USA}
\email{ross@stat.berkeley.edu}

\keywords{
random matrix, Stein's method, heat kernel, exponential approximation}

\date{Version of July 20, 2012}

\begin{abstract}
We derive a new result for exponential approximation using Stein's method of exchangeable pairs.
As an application, an
exponential limit theorem with error term is derived for $|Tr(U)|^2$, where $Tr(U)$ denotes the trace
of a matrix chosen from the Haar measure of the unitary group $U(n,\mathbb{C})$.
\end{abstract}

\maketitle

\section{Introduction} \label{intro}

The purpose of this paper is to further develop exponential approximation, using Stein's method of exchangeable pairs. The first use
of exchangeable pairs in exponential approximation was in the paper \cite{CFR}, which studied the spectrum of the Bernoulli-Laplace Markov chain. Unfortunately the results in \cite{CFR}, which use the Kolmogorov metric, are very complicated and seem quite hard to apply in other examples.
We provide approximation results similar to those in~\cite{CFR}, but which are significantly easier to compute. In particular, we do not see how to apply the results of \cite{CFR} to the example in this paper.

We work in a ``smooth" test function metric, but also provide bounds in the Kolmogorov metric, which is defined for random
variables $W$ and $Z$ to be
\ba{
d_K(W,Z)=\sup_{t\in\mathbb{R}}|\IP(W\leq t)-\IP(Z\leq t)|.
}
Our main theoretical result is the following theorem (see also Theorem~\ref{thmba} below).
\begin{theorem}\label{main}
Let $Z$ be a mean one exponential random variable.
If $W\geq0$  is a random variable with finite second moment and $(W,W')$ is
an exchangeable pair such that for some $a>0$ and sigma-field $\mathcal{F}\supseteq\sigma(W)$,
\ban{
\IE[W'-W|\mathcal{F}]=-a(W-1)+R, \label{lin}
}
then for all $\delta>0$,
\ba{
&d_K(W,Z)\leq \frac{8}{\delta}\IE\left|W-\frac{\IE[(W'-W)^2|\mathcal{F}]}{2a}\right|+\frac{2}{\delta}\abs{\IE W -1}  \\
&\qquad+\left(\frac{(5-6/e)}{\delta}+\frac{3}{\delta^2}\right)\frac{\IE\abs{W'-W}^3}{a}+\frac{8}{\delta}\frac{\IE\abs{R}}{a} +\delta/2.
}
\end{theorem}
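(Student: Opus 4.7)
The plan is to combine Stein's method for the exponential distribution with a standard smoothing argument in order to pass from smooth test functions to the Kolmogorov metric. For the smoothing, I would use the Lipschitz approximant $h_{t,\delta}$ (equal to $1$ on $[0,t]$, decreasing linearly to $0$ on $[t, t+\delta]$, and $0$ thereafter). Since $\mathrm{Exp}(1)$ has density bounded by $1$, the standard sandwich argument yields
\[ d_K(W, Z) \leq \sup_{t \geq 0} \bigl|\ee h_{t, \delta}(W) - \ee h_{t, \delta}(Z)\bigr| + \delta/2, \]
which reduces the problem to a uniform bound on the smooth-test-function difference.

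For each such $h = h_{t,\delta}$, I would use the generator-type Stein equation
\[ w f'(w) - (w-1) f(w) = h(w) - \ee h(Z), \qquad w \geq 0, \]
whose bounded solution is $f(w) = w^{-1}e^{w}\int_{0}^{w}(h(y) - \ee h(Z)) e^{-y}\,dy$. This is the natural choice because the $Wf'(W)$ term on the left produces precisely the combination $W - \ee[(W'-W)^2 | \mathcal{F}]/(2a)$ that appears in the stated bound. The technical heart of the argument is to establish $\|f\|_\infty \leq C_0$, $\|f'\|_\infty \leq C_1/\delta$, and a bound on $f''$ that is sharp inside the smoothing interval (of order $1/\delta^2$) but milder outside (of order $1/\delta$), so that integration against $|W'-W|^{3}$ yields the stated coefficient $\bigl(\tfrac{5-6/e}{\delta} + \tfrac{3}{\delta^{2}}\bigr)$.

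For the exchangeable pair side, the key identity is $\ee[(W' - W)(f(W) + f(W'))] = 0$, which follows from exchangeability together with antisymmetry. A second-order Taylor expansion $f(W') = f(W) + f'(W)(W'-W) + \tfrac{1}{2}f''(\xi)(W'-W)^{2}$ at an intermediate point $\xi$ and solving for $\ee[(W'-W)f(W)]$ gives
\[ \ee[(W'-W) f(W)] = -\tfrac{1}{2}\ee[f'(W)(W'-W)^{2}] - \tfrac{1}{4}\ee[f''(\xi)(W'-W)^{3}]. \]
Writing $A := \ee[(W'-W)^{2} | \mathcal{F}]/(2a)$, combining this with the identity $\ee[(W'-W)f(W)] = -a\ee[(W-1)f(W)] + \ee[Rf(W)]$ (from conditioning on $\mathcal{F}$ and applying \eq{lin}), and then substituting into the Stein equation in expectation $\ee h(W) - \ee h(Z) = \ee[Wf'(W) - (W-1)f(W)]$ produces
\[ \ee h(W) - \ee h(Z) = \ee\bigl[f'(W)(W - A)\bigr] - \tfrac{1}{4a}\ee[f''(\xi)(W'-W)^{3}] - \tfrac{1}{a}\ee[Rf(W)]. \]
Three of the five terms in the theorem follow directly by inserting the Stein bounds; the $\tfrac{2}{\delta}|\ee W - 1|$ term should emerge from a finer splitting of the leading piece as $\ee[f'(W)(W-1)] + \ee[f'(W)(1-A)]$, combined with the identity $\ee R/a = \ee W - 1$ obtained by taking expectations in \eq{lin}.

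The main obstacle is the Stein solution analysis. The peculiar numerical coefficient $\tfrac{5-6/e}{\delta} + \tfrac{3}{\delta^{2}}$ is not obtainable from a single crude $L^{\infty}$-bound on $f''$, so the argument must exploit the explicit integral representation of $f$ in each of the three regions $w < t$, $w \in [t, t+\delta]$, and $w > t+\delta$, and combine these with a careful region-by-region triangle-inequality split of $\ee|f''(\xi)(W'-W)^{3}|$ that tracks the explicit numerical constants.
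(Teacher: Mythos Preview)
Your overall architecture---Stein equation $wf'(w)-(w-1)f(w)=h(w)-\IE h(Z)$, the exchangeable-pair identity, Taylor expansion to third order, and a smoothing step---matches the paper. But two of your specific choices diverge from the paper and make the argument harder than it needs to be.

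\textbf{Smoothing function.} You propose the piecewise \emph{linear} tent $h_{t,\delta}$, which has $\norm{h''}=\infty$ and forces the region-by-region analysis you describe at the end. The paper instead uses a $C^1$ piecewise \emph{quadratic} smoother (equal to $1$ on $[0,t-\delta]$, $0$ on $[t,\infty)$, and two parabolic arcs in between) with $\norm{h'}=2/\delta$ and $\norm{h''}=4/\delta^2$. With this choice the constants you call ``peculiar'' drop out of a single $L^\infty$ bound: the paper proves, for any $h$ with $h(0)=h'(0)=0$, that the Stein solution satisfies
\[
\norm{f}\leq(1+2/e)\norm{h'},\qquad \norm{f'}\leq 2\norm{h'},\qquad \norm{f''}\leq(5-6/e)\norm{h'}+3\norm{h''},
\]
and then simply substitutes $\norm{h'}=2/\delta$, $\norm{h''}=4/\delta^2$. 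No case split over $w<t$, $w\in[t,t+\delta]$, $w>t+\delta$ is needed; the coefficient $(5-6/e)/\delta+3/\delta^2$ is exactly $\tfrac14\norm{f''}$ after these substitutions (there is an extra factor of $2$ because of the point below).

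\textbf{The $\abs{\IE W-1}$ term.} You try to extract this from splitting $\IE[f'(W)(W-A)]$ and the identity $\IE R/a=\IE W-1$. That is not how it arises in the paper. The $\norm{f''}$ bound above requires $h'(0)=0$ (the proof uses $\abs{h(x)}\leq\norm{h''}x^2/2$ near the origin), which $h_{t,\delta}$ need not satisfy. The paper therefore replaces $h$ by $\tilde h(x)=h(x)-xh'(0)$ and picks up
\[
\abs{\IE h(W)-\IE h(Z)}\leq\abs{\IE\tilde h(W)-\IE\tilde h(Z)}+\norm{h'}\abs{\IE W-1}.
\]
This is where the $\tfrac{2}{\delta}\abs{\IE W-1}$ term comes from, and the price $\norm{\tilde h'}\leq 2\norm{h'}$ is why the other coefficients carry an extra factor of $2$ (e.g.\ $4\norm{h'}=8/\delta$ in the first and fourth terms). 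Your proposed splitting would not produce the term with the right constant, and in any case is unnecessary once you make the $\tilde h$ substitution.
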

\noindent{\it Remark:} The theorem is stated with a choice of $\delta$ in order to simplify the error bound,
but it is obvious that in applications $\delta$ should be chosen to minimize the bound.

One of the attractive points about this result is that the terms are very similar to those which one encounters in normal approximation. To see the parallels, here is a normal approximation theorem of Rinott and Rotar \cite{RR2} (in the Kolmogorov metric).

\begin{theorem} \label{rinrot} Let $(W,W')$ be an
 exchangeable pair of real random variables such that $\ee(W)=0, \ee(W^2)=1$ and
$\ee(W'|W) = (1-a)W + R(W) $ with $0< a <1$. Then for all real $x_0$,
\begin{eqnarray*} & & \left| \pp(W \leq x_0) - \frac{1}{\sqrt{2 \pi}}
\int_{-\infty}^{x_0} e^{-\frac{x^2}{2}} dx \right|\\ & \leq &
\frac{6}{a} \sqrt{Var(\ee[(W'-W)^2|W])} + 19 \frac{\sqrt{\ee(R^2)}}{a} +
6 \sqrt{\frac{1}{a} \ee|W'-W|^3}. \end{eqnarray*} \end{theorem}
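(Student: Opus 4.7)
The plan is to apply Stein's method of exchangeable pairs for the standard normal, paralleling the structure of Theorem~\ref{main} but with the normal Stein equation in place of the exponential one. Fix $x_0\in\mathbb{R}$ and let $f=f_{x_0}$ denote the bounded solution of $f'(w)-wf(w)=\mathbf{1}[w\leq x_0]-\Phi(x_0)$, which satisfies the classical estimates $\norm{f}_\infty\leq \sqrt{2\pi}/4$ and $\norm{f'}_\infty\leq 1$, but whose derivative has a jump at $x_0$. Since $\pp(W\leq x_0)-\Phi(x_0)=\ee[f'(W)-Wf(W)]$, the task reduces to bounding this single expectation uniformly in $x_0$.

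For the key identity, I would use that $F(w,w'):=(w'-w)(f(w)+f(w'))$ is antisymmetric, so exchangeability yields $\ee F(W,W')=0$, i.e.\ $\ee[(W'-W)f(W)]=-\tfrac12\ee[(W'-W)(f(W')-f(W))]$. Feeding in the regression hypothesis $\ee[W'-W\mid W]=-aW+R(W)$ gives
\ba{
\ee[Wf(W)] = \frac{1}{2a}\ee[(W'-W)(f(W')-f(W))] + \frac{1}{a}\ee[R(W)f(W)].
}
Writing $f(W')-f(W)=(W'-W)f'(W)+\int_W^{W'}(f'(t)-f'(W))\,dt$ and conditioning on $W$ in the first summand rearranges this into
\ba{
\ee[f'(W)-Wf(W)] = \ee\!\left[f'(W)\!\left(1-\tfrac{\ee[(W'-W)^2\mid W]}{2a}\right)\right]-\tfrac{1}{2a}\ee\!\left[(W'-W)\!\int_W^{W'}\!(f'(t)-f'(W))\,dt\right]-\tfrac{1}{a}\ee[R(W)f(W)].
}

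The first term is handled by Cauchy-Schwarz: since $\ee[(W'-W)^2]=-2\ee[(W'-W)W]=2a\ee W^2+O(\ee|R\,W|)=2a+\text{small}$, the quantity $1-\ee[(W'-W)^2\mid W]/(2a)$ is essentially mean zero, so pairing with $\norm{f'}_\infty\leq 1$ produces a bound of order $a^{-1}\sqrt{\mathrm{Var}(\ee[(W'-W)^2\mid W])}$, matching the first term in the conclusion. The $R$-term is bounded by $\norm{f}_\infty\sqrt{\ee R^2}/a$, matching the middle term. The main obstacle is the Taylor-remainder integral: because $f'$ has a jump at $x_0$ and is therefore not Lipschitz, one cannot simply dominate $|f'(t)-f'(W)|$ by a constant multiple of $|t-W|$. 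The standard remedy, due to Stein, is to decompose $f'$ into a globally Lipschitz piece (yielding a clean $\ee|W'-W|^3/a$ contribution) plus the indicator $\mathbf{1}[\cdot\leq x_0]$, whose contribution is controlled by a concentration inequality of the form $\pp(x_0-\epsilon<W\leq x_0+\epsilon)\leq C\epsilon+C'(\text{exchangeable-pair errors})$. This concentration bound is itself proved via the exchangeable pair, by feeding a piecewise-linear cutoff test function into the same regression identity; after optimizing in $\epsilon$ one obtains the $\sqrt{\ee|W'-W|^3/a}$ form of the third term. Combining the three estimates with the explicit constants produces the stated inequality.
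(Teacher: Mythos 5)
The paper does not contain a proof of Theorem~\ref{rinrot}: it is quoted verbatim from Rinott and Rotar~\cite{RR2} purely to exhibit the structural analogy between normal and exponential exchangeable-pair bounds, so there is no ``paper's own proof'' to compare against.

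As a reconstruction of the Rinott--Rotar argument, your outline has the right skeleton: the Stein equation for the indicator test function, the classical bounds $\norm{f}_\infty\leq\sqrt{2\pi}/4$ and $\norm{f'}_\infty\leq 1$, the antisymmetric-function identity $\ee[(W'-W)(f(W)+f(W'))]=0$, the regression condition, and the split into a variance term, an $R$ term, and a Taylor remainder. The first two terms are handled correctly, including the observation that $\ee[(W'-W)^2]=2a-2\ee[WR(W)]$ so that the centering of $\ee[(W'-W)^2\mid W]/(2a)$ costs only an additional $R$-type contribution. You also correctly identify the real obstruction: $f'$ jumps at $x_0$, so the remainder integral cannot be Taylor-bounded directly.

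Where the sketch is not yet a proof is precisely the third term. The concentration route you describe, as written, does not obviously produce $\sqrt{\ee|W'-W|^3/a}$: splitting the remainder on $\{|W'-W|\leq\epsilon\}$ and $\{|W'-W|>\epsilon\}$ and using a concentration bound of the form $\IP(|W-x_0|\leq\epsilon)\lesssim\epsilon$ leads to minimizing $\ee|W'-W|^3/(a\epsilon)+\epsilon^3/a$, which optimizes to a $(\ee|W'-W|^3)^{3/4}/a$ contribution, not a square root. The square-root form arises much more naturally from the smoothing strategy that this very paper uses for the exponential case in Lemma~\ref{lemhtd} and the proof of Theorem~\ref{main}: bound $|\ee h(W)-\ee h(Z)|$ for a smoothed indicator $h$ with $\norm{h'}\sim 1/\epsilon$, obtaining a Taylor-remainder contribution of order $\norm{h'}\ee|W'-W|^3/a\sim\ee|W'-W|^3/(a\epsilon)$, then pay an additive $\epsilon$ to pass to the Kolmogorov metric, and optimize $\epsilon\sim\sqrt{\ee|W'-W|^3/a}$. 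So the proposal is plausible in outline but leaves the decisive step as a black box, and the specific mechanism invoked would likely deliver a different exponent than the one stated.
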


For normal approximation, there are algebraically natural examples related to Markov chain spectra and random matrices (see \cite{F1}, \cite{F2}), which are perfectly suited for the computation of terms such as $Var(\ee[(W'-W)^2|W])$ and $|W'-W|^3$. This is why we believe the bound in Theorem \ref{main} will be useful for exponential approximation.

Indeed,
in \ref{rmt}, we consider the random variable $W=|Tr(U)|^2$, where $Tr$ denotes trace and $U$ is from the Haar measure of the unitary
group $U(n,\mathbb{C})$. Since $Tr(U)$ converges to a complex normal \cite{DS}, it follows that $|Tr(U)|^2$ converges to an exponential with mean
1. In studying the correspondence between unitary eigenvalues and zeros of the Riemann zeta function, it is conjectured in \cite{CD} that the
convergence of $|Tr(U)|^2$ to an exponential limit is extremely rapid, more precisely that there are positive $c, \delta$ such that for all $n \geq 1, t
\geq 0$, \[ | \pp(|Tr(U)|^2 \geq t) - e^{-t} | \leq cn^{-\delta n}.\] The authors suggest that this should follow from methods in Johansson's remarkable
paper \cite{J}. This seems challenging to make rigorous, particularly if one wants to make $c,\delta$ explicit. In \ref{rmt}, we give the first rigorous
and explicit error term for this problem,
proving that the Kolmogorov distance between $|Tr(U)|^2$ and a standard mean 1 exponential is at most $2^{9/4}/\sqrt{n}$.
Another approach to this result might be to use the multivariate central limit theorems in \cite{DoSt}; we do not pursue this here.

To close the introduction, we mention some related results using Stein's method for exponential approximation.
Aside from \cite{CFR} which we already mentioned, a recent paper of Chatterjee and Shao \cite{CS}
(and similar results in Section~13.4 of the text~\cite{CGS}),
use Stein's method of exchangeable pairs for exponential approximation.
However our approach is quite different than theirs since
they assume the exchangeable pair $(W,W')$ satisfies \[ \ee(W'-W|W) = 1/c_0 + R(W), \]
with $c_0$ a positive constant, rather than the
linearity condition~\eqref{lin} assumed here.
Another approach to Stein's method for exponential approximation is the ``equilibrium distribution" coupling
for which we refer the reader to the papers by Pek\"{o}z and R\"{o}llin \cite{PR1}, \cite{PR2}, and to the references therein.
For the generator method (in the more general context of chi-squared approximation), one can consult \cite{Lu} or \cite{Re}. We note that the examples in \cite{Lu} and \cite{Re} are about independent random variables, whereas the example in the current paper involves dependence.
Finally, the introductory survey \cite{Ros} has some discussion of these approaches in the wider context of Stein's method.

\section{General theorem} \label{general}

The purpose of this section is to prove Theorem \ref{main} from the introduction.
We first prove an intermediate result which can be thought of as an approximation result for ``smooth" test functions.
\begin{theorem}\label{thmba}
Let $Z$ be a mean one exponential random variable.
If $W\geq0$  is a random variable with finite second moment and $(W,W')$ is
an exchangeable pair such that for some $a>0$ and sigma-field $\mathcal{F}\supseteq\sigma(W)$,
\ban{ \label{waad}
\IE[W'-W|\mathcal{F}]=-a(W-1)+R,
}
then for all twice differentiable functions $h$ with $\norm{h'},\norm{h''}<\infty$,
\ban{
&\left|\IE h(W)-\IE h(Z)\right|\leq 4\norm{h'}\IE\left|W-\frac{\IE[(W'-W)^2|\mathcal{F}]}{2a}\right|+\norm{h'}\abs{\IE W-1} \label{wabd1} \\ 
&\qquad+\left(2(5-6/e) \norm{h'}+3\norm{h''}\right)\frac{\IE\abs{W'-W}^3}{4a}+4\norm{h'}\frac{\IE\abs{R}}{a}.\label{wabd2}  
}
\end{theorem}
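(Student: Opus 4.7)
The plan follows the standard exchangeable-pair template for Stein's method, with a Stein operator adapted to the linearity condition~\eqref{waad}. Because \eqref{waad} features $W - 1$, the natural operator is
\ba{
(\mathcal{L}f)(w) = wf'(w) - (w-1)f(w),
}
which annihilates expectations against $Z \sim \mathrm{Exp}(1)$ after an integration by parts. I solve the Stein equation $\mathcal{L}f(w) = h(w) - \IE h(Z)$ via the integrating factor $we^{-w}$, obtaining
\ba{
f(w) = \frac{e^{w}}{w}\int_0^{w} e^{-t}\bigl(h(t) - \IE h(Z)\bigr)\,dt,
}
extended continuously at $w = 0$ with value $f(0) = h(0) - \IE h(Z)$. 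From this representation I derive sup-norm Stein-factor bounds $\norm{f'} \leq 4\norm{h'}$, $\abs{f(0)} \leq \norm{h'}$, $\norm{f - f(0)} \leq 4\norm{h'}$, and $\norm{f''} \leq 2(5 - 6/e)\norm{h'} + 3\norm{h''}$.

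With $f$ in hand, the identity $\IE h(W) - \IE h(Z) = \IE[Wf'(W) - (W-1)f(W)]$ combines with the exchangeability fact $\IE[(W'-W)(f(W) + f(W'))] = 0$. Expanding this sum, applying~\eqref{waad} to the $f(W)$ piece via the tower property, and Taylor-expanding $f(W') - f(W) = (W'-W)f'(W) + \tfrac12(W'-W)^{2}f''(\xi)$ for some intermediate $\xi$ yields, after rearrangement and a further tower step,
\ba{
\IE h(W) - \IE h(Z) = \IE\!\Bigl[f'(W)\!\Bigl(W - \tfrac{1}{2a}\IE[(W'-W)^{2}\mid\mathcal{F}]\Bigr)\Bigr] - \tfrac{1}{a}\IE[Rf(W)] - \tfrac{1}{4a}\IE[(W'-W)^{3}f''(\xi)].
}
The first and third summands are bounded directly by $\norm{f'}$ and $\norm{f''}$ times the appropriate expectations, producing the $4\norm{h'}\,\IE\abs{W - \tfrac{1}{2a}\IE[(W'-W)^{2}\mid\mathcal{F}]}$ and $(2(5-6/e)\norm{h'} + 3\norm{h''})\,\IE\abs{W'-W}^{3}/(4a)$ contributions. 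For the middle summand I split $f = f(0) + (f - f(0))$: the constant piece equals $f(0)(\IE W - 1)$ through the identity $\IE R = a(\IE W - 1)$ (which follows from $\IE(W'-W) = 0$ and~\eqref{waad}), contributing $\norm{h'}\abs{\IE W - 1}$ once $\abs{f(0)} \leq \norm{h'}$ is invoked, while the varying piece is at most $\norm{f - f(0)} \cdot \IE\abs{R}/a \leq 4\norm{h'}\,\IE\abs{R}/a$.

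The main technical obstacle is the Stein-factor analysis. The explicit $f$ is mildly singular at $w = 0$ and combines a growing factor $e^{w}$ with an integral over a decaying exponential, so producing the tight constants $4$, $2(5 - 6/e)$, and $3$ will require splitting the domain of integration (most naturally at $w = 1$) and integrating by parts to transfer derivatives between $h$ and the exponential kernel. The unusual constant $5 - 6/e$ in particular strongly suggests that one bound is obtained by precisely such a split, where quantities like $\int_0^1 e^{-t}\,dt = 1 - 1/e$ and $\int_0^1 t e^{-t}\,dt = 1 - 2/e$ naturally combine to yield the stated coefficient. Once the Stein factors are in place, the remainder is a routine application of the triangle inequality.
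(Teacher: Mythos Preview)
Your outline is correct and follows the same Stein-equation/exchangeable-pair skeleton as the paper: same characterizing operator $wf'(w)-(w-1)f(w)$, same solution (your integral and the paper's $-\frac{e^w}{w}\int_w^\infty$ agree since $\int_0^\infty e^{-t}(h(t)-\IE h(Z))\,dt=0$), and the same decomposition of $\IE[Wf'(W)-(W-1)f(W)]$ into a conditional-variance term, a cubic Taylor remainder, and an $R$ term.

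The one organizational difference is how the term $\norm{h'}\abs{\IE W-1}$ appears. The paper first replaces $h$ by $\tilde h(x)=h(x)-xh'(0)$, picking up $\abs{h'(0)}\abs{\IE W-1}$ immediately and then proving the Stein-factor bounds under the extra hypothesis $\tilde h'(0)=0$; the doubling $\norm{\tilde h'}\le 2\norm{h'}$ is what produces the constants $4$ and $2(5-6/e)$. You instead keep $h$ general and split $f=f(0)+(f-f(0))$ inside $a^{-1}\IE[Rf(W)]$, using $\IE R=a(\IE W-1)$ (from $\IE(W'-W)=0$) to extract the same $\abs{\IE W-1}$ contribution. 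These are two sides of the same coin: the constant function $-h'(0)$ solves $wg'-(w-1)g=(w-1)h'(0)$, so the Stein solutions for $h$ and $\tilde h$ differ by the constant $-h'(0)$, hence $f'=\tilde f'$ and $f''=\tilde f''$, which is why your claimed bounds $\norm{f'}\le 4\norm{h'}$ and $\norm{f''}\le 2(5-6/e)\norm{h'}+3\norm{h''}$ hold without assuming $h'(0)=0$.

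One caution for when you carry out the Stein-factor analysis: the paper's proof of the $\norm{f''}$ bound on $0\le w<1$ relies on $\abs{h(x)}\le \tfrac12\norm{h''}x^2$, which needs $h(0)=h'(0)=0$. If you try to bound $f''$ directly from the explicit formula without that reduction, the individual terms $h'(w)/w$ and $(w-2)h(w)/w^2$ are singular at $0$ (though they cancel against the integral terms). The cleanest route is exactly the observation above---pass to $\tilde h$ for the derivative bounds---which in effect is the paper's trick imported into your framework.
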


The proof of Theorem~\ref{thmba} roughly follows
the usual development of Stein's method of exchangeable pairs for distributional approximation.
Specifically, for $W$ the random variable of interest and $Z$ having the exponential distribution, we
want to bound
$\left|\IE h(W)-\IE h(Z)\right|$
for functions $h$ in some predetermined family of test functions (here,
twice differentiable functions $h$ with $\norm{h'},\norm{h''}<\infty$).
Typically, this program has three components.
\begin{enumerate}
\item[1.] Define a \emph{characterizing operator} $\mathcal{A}$ for the exponential distribution which
has the property that
\ba{
\IE \mathcal{A}f(Z)=0
}
for all $f$ in a large enough class of functions if and only if $Z\sim Exp(1)$.

\item[2.] For functions $h$ in the class of interest, define $f_h$ to solve
\ban{\label{cheq}
\mathcal{A}f_h(x)=h(x)-\IE h(Z).
}
\item[3.] Using \eq{cheq}, note that
\ba{
\abs{\IE h(W)-\IE h(Z)} = \abs{\IE \mathcal{A}f_h(W)}.
}
Now use properties of the solutions $f_h$ and the auxiliary randomization of
exchangeable pairs to bound this last term.
\end{enumerate}

The next lemma takes care of Items~1 and~2 and also provides
the bounds on the
solutions $f_h$ needed for Item~3 in the program above.
The proof of Theorem~\ref{thmba} is immediately after the proof of the lemma.

\begin{lemma}\label{lemstbd}
Let $Z$ be a mean one exponential random variable.
If $h$ is a function such that the following integrals are well defined,
then
\ban{
f(w)&=f_h(w)=-\frac{e^w}{w}\int_w^\infty (h(x)-\IE h(Z))e^{-x}dx \label{stnsol}
}
solves the differential equation
\ban{
wf'(w)-(w-1)f(w)=h(w)-\IE h(Z). \label{stneq}
}
If $h$ is absolutely continuous with $\norm{h'}<\infty$, then
\ban{
\norm{f}\leq \left(1+\frac{2}{e}\right)\norm{h'}, \hspace{5mm} \norm{f'}\leq  2\norm{h'}. \label{abcobd}
}
If in addition, $h'(0)=0$ and $h'$ is absolutely continuous with $\norm{h''}<\infty$,
then
\ba{
\norm{f''}\leq (5-6/e) \norm{h'}+3\norm{h''}.
}
\end{lemma}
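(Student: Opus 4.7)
My plan is first to verify \eqref{stneq} by direct differentiation of $f$ in \eqref{stnsol}; this is a routine calculation (equivalently, the ODE $f' - (1-1/w)f = (h - \IE h(Z))/w$ has integrating factor $we^{-w}$, which leads to exactly the integral form \eqref{stnsol}). The key conceptual move for the bounds is to rewrite $f$ using the memoryless property of $Z$: substituting $y = w+t$ in the defining integral gives $e^w \int_w^\infty g(y) e^{-y} dy = \IE g(w+Z)$ for any $g$, so, writing $G(s) := \IE h(s+Z)$, one obtains the clean identity
\ba{
wf(w) = \IE h(Z) - G(w) = G(0) - G(w) = -\int_0^w G'(s)\,ds, \qquad G'(s) = \IE h'(Z+s).
}

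For $\norm{f}$: since $|G'(s)| \le \norm{h'}$, we get $|f(w)| \le \norm{h'}$ for $w>0$, and continuity at $w=0$ (where $f(0) = h(0) - \IE h(Z)$ satisfies $|f(0)| \le \norm{h'}$) yields $\norm{f} \le \norm{h'} \le (1 + 2/e)\norm{h'}$. For $\norm{f'}$: differentiate the identity to get $f(w) + wf'(w) = -G'(w)$; for $w \ge 1$ this gives $|f'(w)| \le 2\norm{h'}/w \le 2\norm{h'}$. For $w < 1$ I would integrate by parts to obtain $G(w) - G(0) - wG'(w) = -\int_0^w u\, G''(u)\,du$, so
\ba{
f'(w) = -\frac{1}{w^2}\int_0^w u\,G''(u)\,du.
}
Differentiating the representation $G'(s) = e^s \int_s^\infty h'(y) e^{-y}\,dy$ produces the simple identity $G''(s) = G'(s) - h'(s)$, whence $|G''(s)| \le 2\norm{h'}$ and hence $|f'(w)| \le \norm{h'}$; combined with the $w \ge 1$ case, $\norm{f'} \le 2\norm{h'}$.

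For $\norm{f''}$: another differentiation of $f(w)+wf'(w) = -G'(w)$ gives $2f'(w) + wf''(w) = -G''(w)$. Using the representation of $f'$ above together with the trivial identity $w^2 = 2\int_0^w u\,du$, this can be rearranged into
\ba{
f''(w) = \frac{2}{w^3}\int_0^w u\,(G''(u) - G''(w))\,du.
}
The same differentiation trick gives $G'''(s) = G''(s) - h''(s)$, so $|G'''(s)| \le 2\norm{h''}$ and $|G''(u) - G''(w)| \le 2\norm{h''}|w-u|$; substituting produces $|f''(w)| \le \tfrac{2}{3}\norm{h''}$, which is stronger than the claimed bound. The hypothesis $h'(0)=0$ is not needed along this route; a more elementary alternative --- splitting into $w \le 1$ and $w > 1$ and bounding $|\tilde h(w)| \le \norm{h'}\IE|w-Z| = \norm{h'}(w + 2e^{-w} - 1)$ by the mean-value inequality --- recovers the precise constants $(1+2/e)$ and $(5-6/e)$ stated in the lemma but is more tedious. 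The main obstacle throughout is controlling the apparent $1/w$ singularities near $w=0$; the memoryless representation exposes the required cancellations cleanly.
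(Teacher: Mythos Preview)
Your argument is correct and takes a genuinely different route from the paper. The paper works directly with the explicit formula~\eqref{stnsol}, rewrites it as in their display~\eqref{rewrite}, assumes without loss of generality that $h(0)=0$, and then bounds each piece by substituting $|h(x)|\le\norm{h'}|x|$ (and, for the second-derivative bound, also $|h(x)|\le\norm{h''}x^2/2$), computing the resulting elementary integrals, and splitting into the regimes $w\ge 1$ and $w<1$. Your approach instead exploits the memoryless identity $e^w\int_w^\infty g(y)e^{-y}\,dy=\IE g(w+Z)$ to write $wf(w)=G(0)-G(w)$ with $G(s)=\IE h(s+Z)$, and then repeatedly uses the recursion $G^{(k+1)}=G^{(k)}-h^{(k)}$ together with integration by parts to expose the cancellations near $w=0$. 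This is cleaner and in fact yields sharper constants: you obtain $\norm{f}\le\norm{h'}$ (versus $(1+2/e)\norm{h'}$), your integral formula for $f'$ actually gives $\norm{f'}\le\norm{h'}$ for all $w>0$ (you did not need the separate $w\ge 1$ argument), and $\norm{f''}\le\tfrac{2}{3}\norm{h''}$ without using $h'(0)=0$ at all. The paper's calculation is more hands-on and recovers the exact constants in the statement; your probabilistic representation is more conceptual and strictly improves them.
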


\begin{proof}
The fact that \eqref{stnsol} solves \eqref{stneq} is straightforward to verify.
Now, using that $\IE h(Z)=\int_0^\infty h(x)e^{-x} dx$,
we can rewrite \eqref{stnsol} as
\ban{
f(w)&= -\frac{e^w}{w}\int_w^\infty h(x) e^{-x}dx+\frac{1}{w}\int_0^\infty e^{-x} h(x)dx \notag \\
&=-\frac{e^w(1-e^{-w})}{w}\int_w^{\infty}h(x)e^{-x}dx+\frac{1}{w}\int_0^w h(x)e^{-x}dx. \label{rewrite}
}
To prove \eqref{abcobd}, first note that since translating $h$ by a constant leaves $f$ unchanged, we may (and do) assume
without loss of generality that $h(0)=0$, so that
$h(x)\leq \norm{h'}\abs{x}$. Using this fact and also that
$\int xe^{-x} dx = -e^{-x}(x+1)$ in the equality below, we find
\ba{
\abs{f(w)}&\leq\norm{h'}\left[\frac{e^w(1-e^{-w})}{w}\int_w^{\infty}xe^{-x}dx+\frac{1}{w}\int_0^w xe^{-x}dx\right] \\
	&=\norm{h'}\left[\frac{(1-e^{-w})(w+2)}{w}-e^{-w}\right].
}
To bound this last expression, we compare derivatives to find
\ba{
(1-e^{-w})(w+2)-we^{-w}\leq(1+2/e) w, \hspace{5mm} w\geq0,
}
which yields the first assertion of \eqref{abcobd}.

For the second assertion note that \eq{stneq} implies that
\ban{
f'(w)=\frac{h(w)}{w}-\left(\frac{(1-w)f(w)+\IE h(Z)}{w}\right). \label{stnsol1}
}
Since $|h(x)| \leq \norm{h'}\abs{x}$, the first term
of \eq{stnsol1} is bounded in absolute value by $\norm{h'}$, so we only need
to find an appropriate bound on the term of \eq{stnsol1} that is in parentheses.
We have by \eq{rewrite} that
\begin{equation}\label{exint}
\begin{split}
\frac{(1-w)f(w)+\IE h(Z)}{w}&=  \left(\frac{e^w-1}{w}-\frac{e^{w}-1-w}{w^2}\right) \int_w^\infty h(x)e^{-x}dx  \\
&\qquad+\frac{1}{w^2}\int_0^wh(x)e^{-x}dx.
\end{split}
\end{equation} One easily checks that \[ \frac{e^w-1}{w}-\frac{e^{w}-1-w}{w^2} \geq 0.\]
Now taking the absolute value of \eq{exint}, using the triangle inequality,
bounding $\abs{h(x)}\leq\norm{h'}\abs{x}$, and using $\int xe^{-x} dx = -e^{-x}(x+1)$ , we find
\ba{
&\left|\frac{(1-w)f(w)+\IE h(Z)}{w}\right| \\
&\quad\leq \norm{h'}\left(\frac{(w+1)(w+e^{-w}-1)}{w^2}+\frac{1-(w+1)e^{-w}}{w^2}\right)=\norm{h'},
}
which now yields the second assertion of \eqref{abcobd}.

To prove the final statement of the lemma, take
the derivative of \eqref{stnsol1} using the expression \eqref{exint} to find
\ban{
f''(w)&=\frac{h'(w)}{w}+\frac{(w-2)h(w)}{w^2} \label{2der1}\\
&+\frac{2-(w^2-2w+2)e^w}{w^3}\int_w^\infty h(x)e^{-x} dx
+\frac{2}{w^3}\int_0^w h(x)e^{-x}dx. \label{2der2}
}
To bound these expressions we first note that since $h'(0)=h(0)=0$,
\ba{
\abs{h(x)}\leq\min\{\norm{h'} \abs{x}, \norm{h''}x^2/2\}, \text{ and } \abs{h'(x)}\leq \norm{h''}\abs{x}
}
and in particular, $\abs{h(x)}$ is bounded above by both terms appearing in the minimum.
Thus, the absolute value of the right hand side of \eqref{2der1} is bounded above by
\ba{
\norm{h''} + \min\{\abs{w/2-1}\norm{h''}, \abs{1-2/w}\norm{h'}\}&\leq \norm{h''}+\max\{\norm{h'},\norm{h''}\}, \\
	&\leq 2\norm{h''}+\norm{h'}
}
where we have used that $\min\{\abs{w/2-1}, \abs{1-2/w}\}\leq 1$.

We bound the second term \eqref{2der2} differently according to $w\geq1$ or $w<1$.
Suppose that $w \geq 1$. Then note that $(w^2-2w+2) e^w \geq e^w \geq 2$.
Using that $\abs{h(x)}\leq\norm{h'}\abs{x}$ and $\int xe^{-x} dx = -e^{-x}(x+1)$, we find
the absolute value of \eqref{2der2}
is bounded above by
\ban{
&\norm{h'}\left[\frac{((w^2-2w+2)e^w-2)(w+1)e^{-w}}{w^3}+\frac{2(1-(w+1)e^{-w})}{w^3}\right] \notag \\
&\qquad=\norm{h'}\frac{w^3-w^2+4-4(w+1)e^{-w})}{w^3} \notag \\
&\qquad\leq \norm{h'} \frac{w^3+3(1-(w+1)e^{-w})}{w^3}, \label{t1t}
}
where we have used that $1-w^2\leq0$.
By comparing derivatives we find
\ba{
1-(w+1)e^{-w}\leq(1-2/e)w^3, \hspace{5mm} w\geq1,
}
so that \eqref{t1t} (and hence~\eqref{2der2}) is bounded above by $(4-6/e)\norm{h'}$ for $w\geq1$.

If $0\leq w<1$, then \[ (w^2-2w+2) e^w \geq (w^2-2w+2) \left( 1+w+\frac{w^2}{2} \right) = \frac{w^4+4}{2} \geq 2.\] Using that $\abs{h(x)}\leq\norm{h''}x^2/2$
and  $\int x^2 e^{-x}dx=-e^{-x}(2+2x+x^2)$, we find
the absolute value of \eqref{2der2}
is bounded above by
\ban{
&\norm{h''}\left[\frac{((w^2-2w+2)e^w-2)(2+2w+w^2)e^{-w}}{2w^3}+\frac{2-(2+2w+w^2)e^{-w}}{w^3}\right] \notag \\
&\qquad=\norm{h''}\frac{w^4+8-4e^{-w}(2+2w+w^2)}{2w^3}.\label{t2t}
}
Again by comparing derivatives we find
\ba{
w^4+8-4e^{-w}(2+2w+w^2)\leq 2w^3, \hspace{5mm} 0\leq w<1,
}
so that \eqref{t2t} (and hence~\eqref{2der2}) is bounded above by $\norm{h''}$ for $0<w\leq 1$.
\end{proof}

\begin{proof}[Proof of Theorem \ref{thmba}]
We show that for $h$ as in theorem, $\abs{\IE h(W)-\IE h(Z)}$ is appropriately bounded.
We would like to follow the program outlined at the beginning of the section,
but in order to apply the bounds of Lemma~\ref{lemstbd}, we must have $h'(0)=0$, which is not assumed in
Theorem~\ref{thmba}. We circumvent this problem by replacing $h$ with $\hh(x)=h(x)-xh'(0)$, and we have
\ban{
\abs{\IE h(W)-\IE h(Z)}&\leq\abs{\IE \hh(W)-\IE\hh(Z)}+\abs{h'(0)}\abs{\IE W- \IE Z} \notag \\
	&\leq \abs{\IE \hh(W)-\IE\hh(Z)}+\norm{h'}\abs{\IE W- 1}. \label{htil}
}
In order to bound $\abs{\IE \hh(W)-\IE\hh(Z)}$, we use Lemma~\ref{lemstbd}
in conjunction with Item~3 of the program outlined at the beginning of this section to
show that the absolute value of
\ban{
\IE [Wf'(W)-(W-1)f(W)]  \label{bdth1}
}
is appropriately bounded, where $f$ satisfies \eq{stneq} with $h$ replaced by $\hh$.

Using exchangeability and the linearity condition \eq{waad}, we observe that
\ba{
\IE [(W'-W)(f(W)-f(W'))] &=2\IE [f(W)(W'-W)] \\
&= -2a\IE [(W-1) f(W)] + 2\IE [R f(W)],
}
so that \eqref{bdth1} is equal to
\ba{
\IE [Wf'(W)] - (2a)^{-1}\IE [(W'-W)(f(W')-f(W))] -a^{-1}\IE [R f(W)].
}
We rewrite this expression as
\ba{
&\IE \left[ f'(W)\left(W-\frac{\IE[(W'-W)^2|\mathcal{F}]}{2a}\right) \right] \\
&\qquad -\IE \left[ \frac{(W'-W)}{2a}\int_0^{W'-W} [f'(W+t)-f'(W)] dt \right] -a^{-1}\IE [R f(W)].
}
Now taking the absolute value of this last expression,
we find that \eq{bdth1} in absolute value is bounded above by
\ba{
&\norm{f'}\IE\left|W-\frac{\IE[(W'-W)^2|\mathcal{F}]}{2a}\right| \\
&\qquad+\norm{f''}\IE \left[ \frac{\abs{W'-W}}{2a}\int_0^{W'-W}\abs{t} dt \right]
+\norm{f}\frac{\IE\abs{R}}{a}. 
}
The result now easily follows after applying the bounds of Lemma~\ref{lemstbd}, with $h$ replaced by $\hh$,
noting that $\norm{\hh''}=\norm{h''}$ and $\norm{\hh'}=\norm{h'-h'(0)}\leq2\norm{h'}$, and recalling~\eqref{htil}.
\end{proof}

We are now in a position to prove Theorem~\ref{main}. First define the function
for $t,x\geq0$, and $\delta>0$,
\begin{equation}\label{htd}
 h_{t,\delta}(x) = \left\{
     \begin{array}{ll}
       1, &  x \leq t-\delta\\
       1-\frac{2(x-t+\delta)^2}{\delta^2}, &t-\delta<x\leq t-\delta/2 \\
       \frac{2(x-t)^2}{\delta^2},&t-\delta/2<x\leq t \\
       0, & x > t
     \end{array}
   \right.
\end{equation}

The next lemma states some important facts regarding the use of $h_{t,\delta}$ in our framework.
\begin{lemma}\label{lemhtd}
If $t\geq0$, $\delta>0$, and  $h_{t,\delta}$ is defined by~\eqref{htd}, then
\ba{
\norm{h_{t,\delta}}=1, \hspace{5mm} \norm{h_{t,\delta}'}=2/\delta, \hspace{5mm} \norm{h_{t,\delta}''}=4/\delta^2.
}
If $W\geq0$ is a random variable and $Z$ has the exponential distribution with mean one, then
\ban{
d_K(W,Z)\leq\sup_{t\geq0}\abs{\IE h_{t,\delta}(W)-\IE h_{t,\delta}(Z)}+\delta/2. \label{smo}
}
\end{lemma}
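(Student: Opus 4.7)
The plan splits into the two assertions. For the sup-norm identities I would simply differentiate the piecewise quadratic formula~\eqref{htd} on each of its four intervals. The first and fourth pieces contribute zero; on the middle two, $h_{t,\delta}$ is quadratic, so $h_{t,\delta}'$ is linear and $h_{t,\delta}''$ is the constant $\mp 4/\delta^2$. Checking continuity of $h_{t,\delta}'$ at the three interior breakpoints shows it vanishes at $t-\delta$ and $t$, equals $-2/\delta$ at $t-\delta/2$, and is monotone on each piece, giving $\norm{h_{t,\delta}'} = 2/\delta$ and $\norm{h_{t,\delta}''} = 4/\delta^2$. The bound $\norm{h_{t,\delta}} = 1$ is immediate from~\eqref{htd}.

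For the Kolmogorov bound I would run the standard sandwich/smoothing argument. The key observation is that for every $x \geq 0$,
\ba{
h_{t,\delta}(x) \leq \mathbf{1}_{\{x \leq t\}} \leq h_{t+\delta,\delta}(x),
}
since $h_{t,\delta}$ vanishes on $(t,\infty)$ while $h_{t+\delta,\delta}$ equals $1$ on $(-\infty,t]$. Taking expectations under $W$ and $Z$ separately and combining the two sides yields, for every $t \geq 0$,
\ba{
\abs{\IP(W \leq t) - \IP(Z \leq t)} \leq \sup_{s \geq 0} \abs{\IE h_{s,\delta}(W) - \IE h_{s,\delta}(Z)} + \max\{\varepsilon^+(t),\varepsilon^-(t)\},
}
where $\varepsilon^+(t) = \IE h_{t+\delta,\delta}(Z) - \IP(Z \leq t)$ and $\varepsilon^-(t) = \IP(Z \leq t) - \IE h_{t,\delta}(Z)$ are the two nonnegative ``smoothing losses'' against~$Z$.

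The remaining step is to show $\varepsilon^\pm(t) \leq \delta/2$ uniformly in $t \geq 0$. Each is the integral of a nonnegative function supported on an interval of length at most $\delta$ contained in $[0,\infty)$, against the exponential density $e^{-x}$. The trick is to discard the density via $e^{-x} \leq 1$ on $[0,\infty)$ and reduce to elementary Lebesgue integrals of the two quadratic pieces; a direct calculation gives $\int_t^{t+\delta} h_{t+\delta,\delta}(x)\,dx = \delta/2$ and $\int_{\max(0,t-\delta)}^t [1 - h_{t,\delta}(x)]\,dx \leq \delta/2$. Finally, because $W$ and $Z$ are both nonnegative, the supremum defining $d_K$ may be restricted to $t \geq 0$, giving~\eq{smo}. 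The main obstacle is not conceptual but bookkeeping: one must handle the regime $0 \leq t < \delta$, where the nominal support of $1 - h_{t,\delta}$ is clipped to $[0,t]$ and the piecewise formula partially degenerates. The bound $e^{-x} \leq 1$ is exactly what makes this clipping harmless and produces the clean constant $\delta/2$ rather than something like $1 - e^{-\delta}$.
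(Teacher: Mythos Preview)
Your argument is correct and follows essentially the same route as the paper: direct piecewise differentiation for the norm identities, and the sandwich $h_{t,\delta}\leq \mathbf{1}_{\{\cdot\leq t\}}\leq h_{t+\delta,\delta}$ together with the crude bound $e^{-x}\leq 1$ to control the smoothing losses by $\delta/2$. The only difference is cosmetic: the paper bounds the two directions $\IP(W\leq t)-\IP(Z\leq t)$ and $\IP(Z\leq t)-\IP(W\leq t)$ separately rather than packaging them into a single $\max\{\varepsilon^+,\varepsilon^-\}$, and it does not spell out the $t<\delta$ clipping you flag (which, as you note, is harmless).
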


\begin{proof}
The first assertion follows from direct computation. For the second, note that
\ba{
\IP(W\leq t)-\IP(Z\leq t)&\leq \IE h_{t+\delta,\delta}(W)-\IP(Z\leq t) \\
	&=\IE h_{t+\delta,\delta}(W)-\IE h_{t+\delta,\delta}(Z) + \IE h_{t+\delta,\delta}(Z)-\IP(Z\leq t) \\
	&\leq \abs{\IE h_{t+\delta,\delta}(W)-\IE h_{t+\delta,\delta}(Z)}+\int_t^{t+\delta}h_{t+\delta,\delta}(x)e^{-x}dx.
}
Since $e^{-x}\leq 1$ for $x>0$, we find by direct computation that
\ba{
\int_t^{t+\delta}h_{t+\delta,\delta}(x)e^{-x}dx\leq\int_t^{t+\delta}h_{t+\delta,\delta}(x)dx= \delta/2.
}
Taking supremums, we have shown
\ban{
\sup_{t\geq0}[\IP(W\leq t)-\IP(Z\leq t)]\leq\sup_{t\geq0}\abs{\IE h_{t,\delta}(W)-\IE h_{t,\delta}(Z)}+\delta/2.\label{oha}
}
A similar argument
starting from
\ba{
\IP(Z\leq t)-\IP(W\leq t)&\leq \IP(Z\leq t)-\IE h_{t,\delta}(W)
}
establishes \eqref{oha} with the left hand side replaced by
\[ \sup_{t\geq0}[\IP(Z\leq t)-\IP(W\leq t)] , \] which proves the lemma.
\end{proof}

\begin{proof}[Proof of Theorem~\ref{main}]
First apply Theorem~\ref{thmba} with $h$ replaced by $h_{t,\delta}$
to obtain a bound on $\sup_{t\geq0}\abs{\IE h_{t,\delta}(W)-\IE h_{t,\delta}(Z)}$.
From this point, the result follows from the bounds of Lemma~\ref{lemhtd}
and~\eqref{smo}.
\end{proof}

\section{Exponential approximation of $|Tr(U)|^2$} \label{rmt}

The main purpose of this section is to prove the following result.

\begin{theorem} \label{main2} Let $W=|Tr(U)|^2$, where $U$ is from the Haar measure of $U(n,\mathbb{C})$. Then for $n \geq 8$,
the Kolmogorov distance between $W$ and an exponential with mean one is at most $2^{9/4}/\sqrt{n}$.
\end{theorem}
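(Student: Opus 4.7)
The plan is to apply Theorem~\ref{main} to an exchangeable pair built from the heat semigroup on $U(n,\mathbb{C})$, then send the heat parameter $t\to 0$ so that only the ``smooth'' conditional-variance term of the bound survives. Let $X$ be a centered Gaussian in $\mathfrak u(n)$ (with covariance the Hilbert--Schmidt inner product), independent of the Haar-distributed $U$, and set $U'=Ue^{\sqrt{t}X}$, $W'=|Tr(U')|^2$. Haar invariance of $U$ combined with the symmetry $X\sim -X$ gives exchangeability of $(W,W')$. A Taylor expansion together with the decomposition $|Tr(U)|^2=1+\chi_{\mathrm{ad}}(U)$ and the adjoint-representation Casimir eigenvalue $-2n$ yields $\IE[W'-W|U]=-tn(W-1)+O(t^2)$, so the linearity condition holds with $a=tn$ and $R=O(t^2)$. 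Similarly $\IE|W'-W|^3=O(t^{3/2})$ and $\IE[(W'-W)^2|U]=t|\nabla W|^2+O(t^2)$; hence as $t\to 0$ the $R$- and third-moment terms of Theorem~\ref{main} vanish, $\IE W=1$ exactly, and $\IE[(W'-W)^2|U]/(2a)\to|\nabla W|^2/(2n)$. The limiting bound reads
\[
d_K(W,Z)\leq\frac{8}{\delta}\,\IE\bigl|W-|\nabla W|^2/(2n)\bigr|+\frac{\delta}{2}.
\]

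Next compute $|\nabla W|^2$ on $U(n)$ using the orthonormal-basis identity $\sum_\alpha (X_\alpha)_{ij}(X_\alpha)_{kl}=-\delta_{il}\delta_{jk}$, which implies $\sum_\alpha Tr(UX_\alpha)\,Tr(VX_\alpha)=-Tr(UV)$. A direct calculation yields
\[
|\nabla W|^2 = 2n\,|Tr(U)|^2 - 2\,\mathrm{Re}\bigl(\overline{Tr(U)}^{\,2}\,Tr(U^2)\bigr),
\]
so $W-|\nabla W|^2/(2n)=\tfrac{1}{n}\,\mathrm{Re}\bigl(\overline{Tr(U)}^{\,2}Tr(U^2)\bigr)$. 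Setting $z=\overline{Tr(U)}^{\,2}Tr(U^2)$ and using the identity $\IE(\mathrm{Re}\,z)^2=(\IE|z|^2+\mathrm{Re}\,\IE z^2)/2$ together with Cauchy--Schwarz, the Diaconis--Shahshahani moment formula reduces both expectations to moments of independent circular complex Gaussians $Z_1,Z_2$ of variances $1$ and $2$: rotational symmetry forces $\IE z^2=0$, while $\IE|z|^2=\IE|Z_1|^4\,\IE|Z_2|^2=4$. Thus $\IE\bigl|W-|\nabla W|^2/(2n)\bigr|\leq\sqrt{2}/n$.

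Optimizing $\delta=4\cdot 2^{1/4}/\sqrt n$ in $8\sqrt{2}/(\delta n)+\delta/2$ produces exactly $2^{9/4}/\sqrt n$. The main obstacle is the very first step: one must rigorously justify the $t\to 0$ passage, applying Theorem~\ref{main} for each $t>0$ and verifying term-by-term that the bound converges to the stated limit (in particular that $\IE[(W'-W)^2|U]/(2a)$ converges to $|\nabla W|^2/(2n)$ in $L^1$). Everything else---the Lie-theoretic calculation of $|\nabla W|^2$ and the Diaconis--Shahshahani moment evaluation---is routine.
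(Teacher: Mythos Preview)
Your argument is correct and is essentially the paper's own proof: you build the exchangeable pair via the heat flow on $U(n)$, let $t\to 0$ so that only the conditional-variance term survives in Theorem~\ref{main}, identify that term with $\frac{1}{n}\mathrm{Re}\bigl(\overline{Tr(U)}^{2}Tr(U^2)\bigr)$, bound its $L^1$ norm by $\sqrt{2}/n$ via the Diaconis--Shahshahani moment formula, and optimize $\delta$. The only differences are cosmetic---you use the explicit Gaussian perturbation $U'=Ue^{\sqrt{t}X}$ and the language of $|\nabla W|^2$/Casimir eigenvalues, whereas the paper phrases the same computations through Rains' power-sum identities $p_\lambda$ and a Laplacian normalized without the It\^o factor of $1/2$ (whence its $a=2nt$ versus your $a=tn$); the ratios entering Theorem~\ref{main} are identical.
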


To construct an exchangeable pair to be used in our application, we use the heat kernel of $U(n,\mathbb{C})$. This has proved useful in other Stein's method problems about random matrices \cite{F3}, \cite{FR}, \cite{DoSt}.  See \cite{G}, \cite{Ro} for a detailed discussion of heat kernels on compact Lie groups. The papers \cite{L},\cite{Liu}, \cite{R} illustrate combinatorial uses of heat kernels on compact Lie groups,
and \cite{Liu} also discusses the use of the heat kernel for finite groups.

The heat kernel on a compact Lie group $G$ is defined by setting for $x,y \in G$ and $t \geq 0$, \begin{equation} \label{heatk} K(t,x,y) =
\sum_{n \geq 0} e^{-\lambda_n t} \phi_n(x) \overline{\phi_n(y)}, \end{equation} where the $\lambda_n$ are the eigenvalues
of the Laplacian repeated according to multiplicity, and the $\phi_n$ are an orthonormal basis of eigenfunctions of $L^2(G)$; these can be
taken to be the irreducible characters of $G$.

We use the following properties of the heat kernel, where $\Delta$ denotes the Laplacian of $G$. Part 1 of Lemma \ref{spectral}
is from page 198 of \cite{G}. Part 2 of Lemma \ref{spectral} is immediate from the expansion \eqref{heatk}. Part 3 of
Lemma \ref{spectral} is Lemma 2.5 of \cite{DoSt}.

\begin{lemma} \label{spectral} Let $G$ be a compact Lie group, $x,y \in G$, and $t \geq 0$.
\begin{enumerate}
\item $K(t,x,y)$ converges and is non-negative for all $x,y,t$.
\item $\int_{y \in G} K(t,x,y) dy = 1$, where the integration is with respect to Haar measure of $G$.
\item For smooth $\phi$, as $t \rightarrow 0$, one has that
\[ \int_{y \in G} K(t,x,y) \phi(y) dy = \phi(x) + t(\Delta \phi)(x) + O(t^2). \]
\end{enumerate}
\end{lemma}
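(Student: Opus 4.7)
The plan is to derive all three parts directly from the spectral expansion (\ref{heatk}), using classical properties of the Laplacian on a compact Lie group $G$ equipped with a bi-invariant metric and Haar measure normalized to total mass one. Throughout I take $\phi_0 \equiv 1$ as the constant eigenfunction with $\lambda_0 = 0$, and label the remaining $\phi_n$ so that $\Delta \phi_n = -\lambda_n \phi_n$ with $\lambda_n > 0$.

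For part (1), convergence of the series at fixed $t>0$ is a consequence of Weyl's asymptotic law $\lambda_n \sim c\, n^{2/\dim G}$ together with a pointwise bound on the eigenfunctions (for example $\|\phi_n\|_\infty \le C\,\lambda_n^{(\dim G - 1)/4}$), which force absolute and uniform convergence on $G \times G$. Non-negativity is the more delicate point; I would use either the probabilistic interpretation of $K(t,x,y)$ as the transition density of Brownian motion on $G$ generated by $\Delta$ (so non-negativity is automatic for a continuous transition density), or a PDE-theoretic argument based on the strong maximum principle for the heat equation on the compact manifold $G$.

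For part (2), integrate the spectral expansion term-by-term, justified by the uniform convergence from part (1). Orthonormality of the $\phi_n$ against $\phi_0 \equiv 1$ gives $\int_G \overline{\phi_n(y)}\, dy = 0$ for all $n \ge 1$, so every term is killed except $n=0$, leaving $e^{-\lambda_0 t}\phi_0(x)\cdot 1 = 1$. For part (3), set $u(t,x) := \int_G K(t,x,y)\,\phi(y)\, dy$ and expand $\phi = \sum_n c_n \phi_n$ with $c_n = \int_G \overline{\phi_n(y)}\,\phi(y)\, dy$. Substituting the spectral expansion and invoking orthogonality collapses the double sum to $u(t,x) = \sum_n c_n\, e^{-\lambda_n t} \phi_n(x)$. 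For $\phi$ smooth on the compact manifold $G$, repeated integration by parts against powers of $\Delta$ shows that $c_n$ decays faster than any polynomial in $\lambda_n$, so the series and its first two $t$-derivatives converge uniformly for $t$ in a compact neighborhood of $0$. Differentiating term-by-term and using $\Delta\phi_n = -\lambda_n \phi_n$ gives $\partial_t u = \Delta u$ with $\partial_t^2 u$ bounded uniformly near $t=0$, and Taylor's theorem yields $u(t,x) = \phi(x) + t\,\Delta\phi(x) + O(t^2)$.

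The main obstacle I anticipate is the non-negativity claim in part (1): the other assertions reduce cleanly to orthogonality and term-by-term calculus once spectral convergence is in hand, but non-negativity of $K$ is not visible from the spectral expansion itself and requires bringing in outside machinery, either the probabilistic interpretation of the heat semigroup or the strong maximum principle for the heat equation on $G$.
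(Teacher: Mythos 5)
You have given a self-contained proof sketch, whereas the paper does not prove this lemma at all: it cites \cite{G}, page 198, for part~(1), remarks that part~(2) is immediate from the expansion \eqref{heatk}, and cites Lemma~2.5 of \cite{DoSt} for part~(3). So your route is genuinely different in kind, and it is essentially correct as a sketch. A few remarks on the comparison. For part~(1), the paper's reference handles convergence and positivity in one stroke; your argument correctly splits the issue, and you are right that non-negativity is the piece that cannot be read off the spectral expansion and needs either the probabilistic interpretation of $K$ as a transition density or the parabolic maximum principle. Your convergence argument via Weyl's law plus a Sogge--H\"ormander sup-norm bound $\norm{\phi_n}_\infty \le C\lambda_n^{(\dim G-1)/4}$ works on a general compact Riemannian manifold; in the Lie group setting it is cleaner, and closer to what the cited sources exploit, to recall that the $\phi_n$ may be taken to be irreducible characters $\chi_\lambda$ (as the paper notes immediately after \eqref{heatk}), which already have $L^2$-norm one and satisfy $\norm{\chi_\lambda}_\infty = \dim V_\lambda$; the Weyl dimension formula and the quadratic growth of the Casimir eigenvalue in the highest weight then give absolute and uniform convergence directly, without invoking any general eigenfunction estimate. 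Parts~(2) and~(3) of your sketch are correct: part~(2) is exactly the ``immediate from \eqref{heatk}'' observation via orthogonality with $\phi_0 \equiv 1$, and your part~(3) --- expand $\phi$ in eigenfunctions, use the rapid decay of Fourier coefficients of a smooth function to justify termwise differentiation in $t$ down to $t=0$, then Taylor expand --- is a legitimate proof of what the paper imports from \cite{DoSt}. The trade-off is the expected one: the paper's citations keep the exposition short and lean on standard heat-kernel literature, while your version is self-contained at the cost of needing some functional-analytic machinery (eigenfunction growth bounds, justification of termwise differentiation) that the cited sources already package.
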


The symmetry in $x$ and $y$ of $K(t,x,y)$ shows that the heat kernel is a reversible Markov
process with respect to the Haar measure of $G$. It is a standard fact \cite{RR}, \cite{Stn} that
reversible Markov processes lead to exchangeable pairs $(W,W')$. Namely suppose one has a Markov chain with
transition probabilities $K(x,y)$ on a state space $X$, and that the Markov chain is reversible with respect to a
probability distribution $\pi$ on $X$. Then given a function $f$ on $X$, if one lets $W=f(x)$ where $x$ is chosen
from $\pi$ and $W'=f(x')$ where $x'$ is obtained by moving from $x$ according to $K(x,y)$, then $(W,W')$
is an exchangeable pair. In the special case of the heat kernel on a compact Lie group $G$, given a function $f$ on $G$,
one can construct an exchangeable pair $(W,W')$ by letting $W=f(U)$ where $U$ is chosen from Haar measure, and $W'=f(U')$,
where $U'$ is obtained by moving time $t$ from $U$ via the heat kernel. To define the exchangeable pair
$(W,W')$ used in this paper, we further specialize by setting $f(U)=|Tr(U)|^2$.

If $\lambda$ is an integer partition, and $m_j$ denotes the multiplicity of part $j$ in $\lambda$, we define
$p_{\lambda}(U) = \prod_j Tr(U^j)^{m_j}$. For example, $p_{5,3,3}(U) = Tr(U^5) Tr(U^3)^2$. Typically we
suppress the $U$ and use the notation $p_{\lambda}$.

The next three lemmas are from Rains \cite{R}; here $\nabla f \cdot \nabla g$ is defined by
\[ \nabla f \cdot \nabla g = \frac{1}{2}[ \Delta(fg) - g \Delta f -f \Delta g ] .\]

\begin{lemma} \label{one} $\Delta_{U(n)} p_1 = -n p_1$. \end{lemma}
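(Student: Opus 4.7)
My plan is to prove $\Delta_{U(n)} p_1 = -n\, p_1$ by reducing it to the standard fact that the Laplacian on a compact Lie group acts on matrix coefficients of an irreducible representation as (the negative of) the Casimir scalar. Concretely, $p_1(U) = \mathrm{Tr}(U)$ is the character of the defining $n$-dimensional representation of $U(n)$, so it suffices to compute the Casimir eigenvalue on that representation.

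First I would fix an inner product on $\mathfrak{u}(n)$, the Lie algebra of skew-Hermitian matrices, taking $\langle X,Y\rangle = -\mathrm{Tr}(XY)$ (which is positive definite on $\mathfrak{u}(n)$ and induces a bi-invariant metric on $U(n)$). Let $\{X_a\}_{a=1}^{n^2}$ be an orthonormal basis of $\mathfrak{u}(n)$ with respect to this inner product, and let $\widetilde{X}_a$ denote the associated left-invariant vector field. With these conventions, $\Delta = \sum_a \widetilde{X}_a^{\,2}$. For the matrix-entry function $U \mapsto U_{ij}$, a direct computation gives $\widetilde{X}_a(U_{ij}) = (UX_a)_{ij}$, and iterating yields $\widetilde{X}_a^{\,2}(U_{ij}) = (UX_a^{\,2})_{ij}$. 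Summing over $a$,
\[
\Delta(U_{ij}) \;=\; (UC)_{ij}, \qquad \text{where } C := \sum_a X_a^{\,2}.
\]

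Next I would identify $C$ as a scalar. Because the inner product is $\mathrm{Ad}$-invariant, $C$ lies in the center of the universal enveloping algebra, so in the defining representation it acts as a scalar multiple of the identity: $C = c\, I_n$. Taking the trace, $c\cdot n = \mathrm{Tr}(C) = \sum_a \mathrm{Tr}(X_a^{\,2}) = -\sum_a \langle X_a, X_a\rangle = -n^2$, giving $c=-n$. Summing the diagonal entries then yields
\[
\Delta\, p_1(U) \;=\; \sum_{i=1}^{n}\Delta(U_{ii}) \;=\; \sum_{i=1}^{n}(UC)_{ii} \;=\; c\cdot \mathrm{Tr}(U) \;=\; -n\, p_1(U).
\]

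The one genuine subtlety, rather than a real obstacle, is making sure the normalization of the Laplacian in the paper matches the normalization implicit in the choice of inner product on $\mathfrak{u}(n)$; since later lemmas from Rains will be used in combination with this one, the convention is fixed by consistency with those lemmas, and the trace-calculation above produces exactly the eigenvalue $-n$ required.
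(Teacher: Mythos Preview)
Your argument is correct: identifying $p_1$ as the character of the standard representation and computing the Casimir eigenvalue via $\mathrm{Tr}(C)=\sum_a\mathrm{Tr}(X_a^2)=-\dim\mathfrak{u}(n)=-n^2$ is exactly the right computation, and your caveat about matching the normalization of the bi-invariant metric to Rains' conventions is well placed. Note, however, that the paper does not actually prove this lemma---it simply quotes it (together with the two companion lemmas on $\nabla p_k\cdot\nabla p_l$ and the product rule) from Rains' paper on Brownian motion on the compact classical groups. So you have supplied a self-contained argument where the paper relies on a citation; the only thing to verify, as you note, is that your choice $\langle X,Y\rangle=-\mathrm{Tr}(XY)$ agrees with the metric implicit in Rains' formulas, which can be checked by confirming that your $\Delta$ also reproduces $(\nabla p_1)\cdot(\nabla\overline{p_1})=n$ as used later in the paper.
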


\begin{lemma} \label{nab} For all integers $k$ and $l$ (not necessarily positive), and unitary $U$,
\[ (\nabla p_k(U)) \cdot (\nabla p_l(U)) = -kl \cdot p_{k+l}(U) .\]
\end{lemma}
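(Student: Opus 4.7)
The plan is to unpack both sides of the claimed identity in a fixed orthonormal basis $\{X_i\}$ of $\mathfrak{u}(n)$. Recall that for any compact Lie group with bi-invariant metric coming from an $\mathrm{Ad}$-invariant inner product on its Lie algebra, if one writes $(Xf)(U):=\frac{d}{dt}\big|_{t=0} f(Ue^{tX})$, then $\Delta f = \sum_i X_i^2 f$ and $\sum_i (X_i f)(X_i g) = \frac{1}{2}[\Delta(fg) - g\Delta f - f\Delta g]$; the latter follows from the Leibniz rule $X_i^2(fg) = (X_i^2 f)g + 2(X_i f)(X_i g) + f(X_i^2 g)$. So by the very formula that defines $\nabla f \cdot \nabla g$ in the excerpt, it suffices to evaluate $\sum_i (X_i p_k)(X_i p_l)$ with respect to the inner product $\langle X,Y\rangle = -Tr(XY)$ on $\mathfrak{u}(n)$, normalized to be consistent with Lemma~\ref{one}.

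The first step is a closed-form computation of the directional derivatives: by the chain rule and cyclicity of the trace,
\[ (X p_k)(U) = \frac{d}{dt}\bigg|_{t=0} Tr\bigl((Ue^{tX})^k\bigr) = k\,Tr(U^k X), \]
which holds for every integer $k$ (for $k<0$ one uses $U^{-1}=U^*$ and runs the same argument on $(Ue^{tX})^{-1} = e^{-tX}U^{-1}$). The lemma thus reduces to the purely linear-algebraic identity
\[ \sum_i Tr(AX_i)\,Tr(BX_i) = -Tr(AB), \qquad A,B \in M_n(\mathbb{C}), \]
applied with $A = U^k$ and $B = U^l$: the prefactor $kl$ then comes from the two instances of the gradient formula, and $-Tr(U^{k+l}) = -p_{k+l}(U)$ is the right-hand side of the claim.

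To prove the trace identity I would work in the explicit orthonormal basis $\{iE_{jj}\}_j \cup \bigl\{(E_{jk}-E_{kj})/\sqrt{2},\; i(E_{jk}+E_{kj})/\sqrt{2}\bigr\}_{j<k}$. A short direct computation shows that the diagonal generators contribute $-\sum_j A_{jj}B_{jj}$ and that for each pair $j<k$ the two off-diagonal generators jointly contribute $-A_{jk}B_{kj} - A_{kj}B_{jk}$, summing to $-\sum_{j,k} A_{jk}B_{kj} = -Tr(AB)$. The main obstacle is simply bookkeeping: checking that this basis is indeed orthonormal for $\langle X,Y\rangle=-Tr(XY)$ and that $\sum_i X_i^2 = -nI_n$ (which is precisely the normalization that forces Lemma~\ref{one}, and which can be verified in the same basis), together with careful handling of the minus signs arising from the skew-Hermitian convention. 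Once these standard checks are done, combining the two displays above yields $(\nabla p_k\cdot\nabla p_l)(U) = -kl\,p_{k+l}(U)$ in one line.
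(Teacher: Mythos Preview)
Your argument is correct. The paper does not actually prove this lemma; it simply quotes it from Rains \cite{R}, so there is no in-paper proof to compare against. Your direct computation---differentiating $p_k$ along left-invariant vector fields to get $Xp_k=k\,Tr(U^kX)$ and then verifying the basis identity $\sum_i Tr(AX_i)\,Tr(BX_i)=-Tr(AB)$ in the standard orthonormal basis of $\mathfrak{u}(n)$ for $\langle X,Y\rangle=-Tr(XY)$---is the natural approach and is carried out correctly, including the normalization check $\sum_i X_i^2=-nI$ that pins down consistency with Lemma~\ref{one}.
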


\begin{lemma} \label{product} For all unitary matrices $U$ and class functions $f_1,\cdots,f_k$
\begin{eqnarray*}
& & \Delta \left( \prod_{1 \leq i \leq k} f_i(U) \right) \\
& = & \left( \prod_{1 \leq i \leq k} f_i(U) \right) \left(\sum_{1 \leq i \leq k} \frac{\Delta f_i(U)}{f_i(U)} + 2 \sum_{1 \leq i < j \leq k} \frac{(\nabla f_i(U)) \cdot (\nabla f_j(U))}{f_i(U) f_j(U)} \right).
\end{eqnarray*}
\end{lemma}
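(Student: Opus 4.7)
The plan is to prove Lemma~\ref{product} by induction on $k$. The $k=2$ case is simply the defining identity
\[ \nabla f_1 \cdot \nabla f_2 = \tfrac{1}{2}[\Delta(f_1 f_2) - f_2 \Delta f_1 - f_1 \Delta f_2], \]
rearranged as $\Delta(f_1 f_2) = f_2 \Delta f_1 + f_1 \Delta f_2 + 2 (\nabla f_1 \cdot \nabla f_2)$ and then divided through by $f_1 f_2$, which reproduces the right-hand side of the lemma exactly.

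For the inductive step, I would set $F = \prod_{i=1}^{k-1} f_i$ and apply the $k=2$ case to $\Delta(F \cdot f_k)$ to obtain
\[ \Delta(F f_k) = f_k \Delta F + F \Delta f_k + 2(\nabla F \cdot \nabla f_k). \]
The inductive hypothesis supplies the desired shape for $\Delta F / F$, so to close the induction I need the Leibniz-type identity
\[ \nabla F \cdot \nabla f_k = F \sum_{i=1}^{k-1} \frac{\nabla f_i \cdot \nabla f_k}{f_i}. \]
Granted this, dividing the previous display by $F f_k$, substituting the inductive formula for $\Delta F / F$, and bundling the terms $\Delta f_k / f_k$ into the single sum and $(\nabla f_i \cdot \nabla f_k)/(f_i f_k)$ into the double sum produces the stated identity. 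The class-function hypothesis is not used in this algebraic step; it enters only to place us in a setting where $\Delta$ acts in the way recorded by the earlier lemmas.

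The main obstacle is the Leibniz identity for $\nabla \cdot \nabla$, because attempting to derive it from the algebraic definition of $\nabla f \cdot \nabla g$ in terms of $\Delta$ alone is circular — such a derivation is essentially equivalent to the $k=3$ case of the lemma itself. I would instead invoke the underlying Riemannian structure: on the compact Lie group $U(n,\mathbb{C})$ equipped with the bi-invariant metric, choose an orthonormal basis $X_1,\dots,X_d$ of the Lie algebra regarded as left-invariant vector fields, so that $\Delta = \sum_i X_i^2$. Unwinding the defining identity for $\nabla f \cdot \nabla g$ then yields $\nabla f \cdot \nabla g = \sum_i (X_i f)(X_i g)$, and the Leibniz identity $\nabla(GH) \cdot \nabla h = G(\nabla H \cdot \nabla h) + H(\nabla G \cdot \nabla h)$ falls out immediately from the fact that each $X_i$ is a derivation, $X_i(GH) = G(X_i H) + H(X_i G)$. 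Iterating this rule $k-2$ times, peeling off one factor of $F$ at a time, reduces $\nabla F \cdot \nabla f_k$ to the required sum and completes the induction.
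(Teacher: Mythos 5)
The paper does not actually supply a proof of Lemma~\ref{product}; it cites the result (along with Lemmas~\ref{one} and~\ref{nab}) from Rains~\cite{R}. So there is no in-paper proof to compare your argument against, and your proposal must be judged on its own. Judged that way, it is correct and complete. The base case $k=2$ is literally the defining identity for $\nabla f\cdot\nabla g$, as you say. Your inductive step is the standard one, and you correctly identify the crux: the Leibniz rule
\[
\nabla(GH)\cdot\nabla h \;=\; G\,(\nabla H\cdot\nabla h) + H\,(\nabla G\cdot\nabla h)
\]
is genuinely additional input. Your observation that deriving it from the purely algebraic definition of $\nabla\cdot\nabla$ together with the $k=2$ identity would be circular is also right --- as you note, given $k=2$, the Leibniz rule is equivalent to the $k=3$ case of the lemma, so one cannot bootstrap it from nothing.

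Your resolution via the Lie group structure is the right move and is, essentially, the route any proof of this statement must take: on $U(n,\mathbb{C})$ with the bi-invariant metric, $\Delta$ is the Casimir $\sum_i X_i^2$ for an orthonormal basis of $\mathfrak{u}(n)$ regarded as left-invariant vector fields; expanding $\Delta(fg)$ then gives $\nabla f\cdot\nabla g = \sum_i (X_i f)(X_i g)$, and Leibniz for $\nabla\cdot\nabla$ falls out of Leibniz for each first-order operator $X_i$. (Equivalently, the lemma is just the statement that $\Delta$ is a pure second-order operator --- the ``diffusion'' property of the carr\'e du champ $\Gamma = \nabla\cdot\nabla$ --- and the Casimir form of $\Delta$ is the cleanest way to exhibit this.) One could shave the argument slightly by skipping the induction altogether: once you have $\nabla f\cdot\nabla g=\sum_i(X_if)(X_ig)$, iterating the product rule in $X_i\!\left(\prod_j f_j\right)=\left(\prod_j f_j\right)\sum_j X_if_j/f_j$ and expanding $\sum_i\left(X_i\prod_j f_j\right)^{\!2}$ directly gives the stated formula for all $k$ at once, which is probably how Rains does it. But your induction is equally valid, and flagging that the class-function hypothesis plays no role in the algebra is a nice touch.

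Two small caveats worth a sentence in a polished write-up: the identity $\Delta=\sum_i X_i^2$ on a compact Lie group holds up to an overall sign depending on one's convention for the Laplace--Beltrami operator, but the lemma is linear in $\Delta$ and $\nabla\cdot\nabla$ is defined in terms of $\Delta$, so the sign choice is immaterial; and the $f_i$ appearing in denominators on the right-hand side should be read formally, i.e.\ after clearing the prefactor $\prod_i f_i$ the identity involves no division and holds even where some $f_i$ vanish.
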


The final lemma is a moment computation from \cite{DS}.

\begin{lemma} \label{mom} Let $U$ be Haar distributed on $U(n,\mathbb{C})$. Let $(a_1,\cdots,a_k)$ and
$(b_1,\cdots,b_k)$ be vectors of non-negative integers. Then one has that for all $n \geq \sum_{i=1}^k (a_i+b_i),$
\[ \ee \left[ \prod_{j=1}^k Tr(U^j)^{a_j} \overline{Tr(U^j)^{b_j}} \right] = \delta_{\vec{a} \vec{b}} \prod_{j=1}^k j^{a_j} a_j!. \]
\end{lemma}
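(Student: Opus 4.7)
My plan is to reduce the moment identity to the orthogonality of irreducible characters of $U(n,\mathbb{C})$, via the classical expansion of power-sum symmetric functions in terms of Schur functions. Letting $\lambda$ denote the partition with $a_j$ parts equal to $j$ and $\mu$ the partition with $b_j$ parts equal to $j$, identify
\[ \prod_{j=1}^k Tr(U^j)^{a_j} \;=\; p_\lambda(U), \qquad \prod_{j=1}^k \overline{Tr(U^j)^{b_j}} \;=\; \overline{p_\mu(U)}, \]
where $p_\lambda$ denotes the power-sum symmetric function evaluated at the eigenvalues of $U$. The Frobenius character formula then gives
\[ p_\lambda \;=\; \sum_{\rho\,\vdash\, |\lambda|} \chi^\rho_\lambda\, s_\rho, \qquad \overline{p_\mu} \;=\; \sum_{\sigma\,\vdash\, |\mu|} \chi^\sigma_\mu\, \overline{s_\sigma}, \]
where $\chi^\rho_\lambda$ is the value of the irreducible $S_{|\lambda|}$-character indexed by $\rho$ at the conjugacy class of cycle type $\lambda$, and $s_\rho$ is the Schur function.

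Next I would invoke the orthogonality of irreducible characters of $U(n,\mathbb{C})$. Since the Schur functions $s_\rho$ with $\ell(\rho)\leq n$ are exactly the irreducible characters of $U(n,\mathbb{C})$,
\[ \int_{U(n,\mathbb{C})} s_\rho(U)\, \overline{s_\sigma(U)}\, dU \;=\; \delta_{\rho\sigma}, \]
while $s_\rho$ vanishes identically on $U(n,\mathbb{C})$ when $\ell(\rho)>n$. Assuming $n$ is large enough that every partition $\rho \vdash |\lambda|$ and $\sigma \vdash |\mu|$ appearing in the expansions above satisfies $\ell(\rho),\ell(\sigma)\leq n$, integrating term by term collapses the double sum to
\[ \IE\!\left[p_\lambda\,\overline{p_\mu}\right] \;=\; \sum_\rho \chi^\rho_\lambda\, \chi^\rho_\mu, \]
which forces $|\lambda|=|\mu|$ to be nonzero. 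The column orthogonality of $S_{|\lambda|}$-characters then gives $\sum_\rho \chi^\rho_\lambda \chi^\rho_\mu = \delta_{\lambda\mu}\, z_\lambda$ with $z_\lambda=\prod_j j^{a_j} a_j!$, which is the claimed identity once one translates between partitions and multiplicity vectors.

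The main obstacle is the bookkeeping around the condition on $n$: the Schur orthogonality step requires $\ell(\rho),\ell(\sigma)\leq n$ for every partition contributing to the Frobenius expansions, which is guaranteed by $n\geq|\lambda|=\sum_j j a_j$ and $n\geq|\mu|=\sum_j j b_j$. In the application to Theorem~\ref{main2}, only $a_j=b_j=0$ for $j\geq 2$ ever appears, so $|\lambda|=a_1$ and $|\mu|=b_1$, and the stated hypothesis $n\geq\sum_i(a_i+b_i)$ comfortably implies what is needed. Apart from this check, the argument is entirely standard symmetric function manipulation, and full details can be found in~\cite{DS}.
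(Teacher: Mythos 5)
The paper does not actually prove Lemma~\ref{mom}; it simply cites \cite{DS}, so there is no in-paper proof to compare against. Your argument is the standard Diaconis--Shahshahani one: expand the power sums in Schur functions via Frobenius, integrate term by term using orthonormality of the Schur functions $\{s_\rho:\ell(\rho)\leq n\}$ on $U(n,\mathbb{C})$, and finish with column orthogonality of $S_m$-characters. That is exactly the right route and it is sound.

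One point to tighten. You correctly identify the hypothesis your argument actually needs, namely $n\geq|\lambda|=\sum_j j\,a_j$ and $n\geq|\mu|=\sum_j j\,b_j$ (so that no $\rho\vdash|\lambda|$ or $\sigma\vdash|\mu|$ drops out of the Schur expansion when restricting to $n$ variables). But the condition printed in the lemma, $n\geq\sum_i(a_i+b_i)=\ell(\lambda)+\ell(\mu)$, does \emph{not} imply this in general: take $a_3=b_3=1$ and all other entries zero, so $\sum_i(a_i+b_i)=2$ while $|\lambda|=|\mu|=3$; a direct Weyl integration on $U(2)$ gives $\IE|Tr(U^3)|^2=2\neq 3^1\cdot 1!$. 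So the identity can genuinely fail under the printed hypothesis, and your sentence ``the stated hypothesis $n\geq\sum_i(a_i+b_i)$ comfortably implies what is needed'' should not be asserted at the level of the general lemma. (The correct hypothesis, as in \cite{DS}, is $\sum_j j\,a_j\leq n$ and $\sum_j j\,b_j\leq n$.) Also, your aside that in the application ``only $a_j=b_j=0$ for $j\geq 2$ ever appears'' is off by one: the paper's proof of Theorem~\ref{main2} uses $p_2$, $p_{2,2}$, $p_{2,1,1}$, etc., so $j=2$ does occur; it is $j\geq 3$ that never appears. That said, every monomial that actually arises there has $|\lambda|,|\mu|\leq 8$ with $n\geq 8$ assumed, so nothing in the paper's application is affected.
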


Throughout we let $W(U) = |Tr(U)|^2 = p_1(U) \overline{p_1(U)}$.

Lemma \ref{Ucond1} computes the conditional expectation $\ee[W'-W|U]$.

\begin{lemma} \label{Ucond1} \[ \ee[W'-W|U] = 2nt(1-W) + O(t^2). \]
\end{lemma}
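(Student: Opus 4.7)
The plan is to exploit the fact that $W = |Tr(U)|^2 = p_1(U)\,\overline{p_1(U)}$ and that $\overline{p_1(U)} = \overline{Tr(U)} = Tr(U^{-1}) = p_{-1}(U)$, so $W = p_1 p_{-1}$ is a product of two $p_k$'s to which the machinery of Lemmas \ref{one}, \ref{nab}, and \ref{product} applies directly. The exchangeable pair is constructed so that $U'$ is obtained from $U$ by running the heat kernel for time $t$, so by Part 3 of Lemma \ref{spectral} applied to the smooth class function $\phi = W$,
\begin{equation*}
\ee[W'\,|\,U] = \int_{U'\in U(n)} K(t,U,U') W(U')\,dU' = W(U) + t(\Delta W)(U) + O(t^2),
\end{equation*}
which immediately reduces the claim to showing $(\Delta W)(U) = 2n(1 - W(U))$.

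To compute $\Delta(p_1 p_{-1})$, I would apply Lemma \ref{product} with $k=2$, $f_1 = p_1$, $f_2 = p_{-1}$, giving
\begin{equation*}
\Delta(p_1 p_{-1}) = p_1 p_{-1}\left(\frac{\Delta p_1}{p_1} + \frac{\Delta p_{-1}}{p_{-1}} + 2\,\frac{\nabla p_1 \cdot \nabla p_{-1}}{p_1 p_{-1}}\right).
\end{equation*}
By Lemma \ref{one} and the fact that the Laplacian is real (so $\Delta p_{-1} = \Delta \overline{p_1} = \overline{\Delta p_1} = -n\,p_{-1}$), the first two terms contribute $-2n\,p_1 p_{-1} = -2nW$. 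For the cross term I would invoke Lemma \ref{nab} with $k=1$, $l=-1$, which gives $\nabla p_1 \cdot \nabla p_{-1} = -(1)(-1)\,p_{0} = p_0$. Since $p_0(U) = Tr(U^0) = Tr(I_n) = n$, the cross term contributes $2n$, and summing yields $\Delta W = -2nW + 2n = 2n(1-W)$, which is exactly what is needed.

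Substituting into the first display gives $\ee[W'-W\,|\,U] = 2nt(1-W) + O(t^2)$, as claimed. The only mildly delicate step is the identification $\overline{p_1} = p_{-1}$ together with $\Delta p_{-1} = -n p_{-1}$; the former is because unitary matrices satisfy $\overline{Tr(U)} = Tr(U^*) = Tr(U^{-1})$, and the latter either follows from conjugating Lemma \ref{one} or from the fact that $p_1$ and $p_{-1}$ are components of irreducible characters of $U(n)$ with the same Casimir eigenvalue. Everything else is a direct substitution into the three Rains lemmas, so no further obstacle is anticipated.
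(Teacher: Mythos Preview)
Your argument is correct and follows essentially the same route as the paper: apply Part~3 of Lemma~\ref{spectral} to reduce to computing $\Delta(p_1\overline{p_1})$, then use Lemma~\ref{product} together with Lemmas~\ref{one} and~\ref{nab} to obtain $-2nW+2n$. The only cosmetic difference is that you make the identification $\overline{p_1}=p_{-1}$ explicit (which in fact clarifies the invocation of Lemma~\ref{nab} with $k=1$, $l=-1$), whereas the paper leaves this implicit.
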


\begin{proof} Applying part 3 of Lemma \ref{spectral},
\begin{eqnarray*} \ee[W'|U] & = &  W + t \Delta (p_1 \overline{p_1}) + O(t^2) \\
& = & W + t [ \overline{p_1} \Delta (p_1) + p_1 \Delta (\overline{p_1}) + 2 (\nabla p_1) \cdot (\nabla \overline{p_1}) ]
+ O(t^2) \\
& = & W + t [ -2n p_1 \overline{p_1} + 2n ] + O(t^2).
\end{eqnarray*} The second equality was Lemma \ref{product}, and the final equality used Lemmas \ref{one} and \ref{nab}.
\end{proof}

Lemma \ref{Ucond2} computes $\ee[(W'-W)^2|U]$.

\begin{lemma} \label{Ucond2}
\[ \ee[(W'-W)^2|U] = t [ -2 p_2 \overline{p_{1,1}} - 2 \overline{p_2} p_{1,1} + 4nW] + O(t^2) .\]
\end{lemma}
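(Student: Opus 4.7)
The plan is to mimic the proof of Lemma \ref{Ucond1} but applied to $(W'-W)^2$. Conditioning on $U$, the constant $W=W(U)$ factors out, giving
\[ \ee[(W'-W)^2 \mid U] = \ee[W'^{2}\mid U] - 2W\,\ee[W'\mid U] + W^2. \]
Applying part 3 of Lemma \ref{spectral} separately to the smooth class functions $W$ and $W^2$, the $O(1)$ contributions cancel, leaving
\[ \ee[(W'-W)^2 \mid U] = t\bigl[\Delta(W^2) - 2W\,\Delta W\bigr] + O(t^2) = 2t\,(\nabla W \cdot \nabla W) + O(t^2), \]
where the second equality is just the definition of $\nabla\cdot\nabla$ given before Lemma \ref{one}. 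This reduces the problem to computing $\nabla W \cdot \nabla W$ for $W=p_1\overline{p_1}$.

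To do this, I would apply Lemma \ref{product} separately to the two-factor product $W=p_1\cdot\overline{p_1}$ and to the four-factor product $W^2 = p_1\cdot p_1\cdot\overline{p_1}\cdot\overline{p_1}$, and then form the difference $\Delta(W^2)-2W\,\Delta W$. In this difference, all $\Delta p_1/p_1$ and $\Delta\overline{p_1}/\overline{p_1}$ contributions cancel, leaving only gradient--gradient cross terms. Accounting for the six unordered pairs among $\{p_1,p_1,\overline{p_1},\overline{p_1}\}$ (one pair $(p_1,p_1)$, four mixed pairs, one pair $(\overline{p_1},\overline{p_1})$) versus the single pair in $W$ yields
\[ \Delta(W^2) - 2W\,\Delta W = 2\overline{p_1}^{\,2}(\nabla p_1 \cdot \nabla p_1) + 4 p_1\overline{p_1}(\nabla p_1 \cdot \nabla \overline{p_1}) + 2 p_1^{\,2}(\nabla \overline{p_1} \cdot \nabla \overline{p_1}). \]

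Finally, I use $\overline{p_k(U)} = p_{-k}(U)$ for unitary $U$ (since $\overline{\mathrm{Tr}(U^k)} = \mathrm{Tr}(U^{-k})$) together with Lemma \ref{nab} applied for integers of either sign to evaluate the three elementary inner products as $\nabla p_1\cdot\nabla p_1 = -p_2$, $\nabla p_1\cdot\nabla \overline{p_1} = p_0 = n$, and $\nabla \overline{p_1}\cdot\nabla \overline{p_1} = -p_{-2} = -\overline{p_2}$. Substituting these, and recognizing $p_1^{\,2}=p_{1,1}$, $\overline{p_1}^{\,2}=\overline{p_{1,1}}$, and $p_1\overline{p_1}=W$, the right-hand side collapses to $-2p_2\overline{p_{1,1}}-2\overline{p_2}p_{1,1}+4nW$, which multiplied by $t$ is the claim. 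The only non-mechanical step is the combinatorial bookkeeping in the four-factor application of Lemma \ref{product}, where one must verify that the four mixed pairs combine into the coefficient $4p_1\overline{p_1}$ and that the single-factor Laplacian terms exactly cancel against those in $2W\,\Delta W$; everything else reduces to substitution.
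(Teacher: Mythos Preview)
Your proposal is correct and follows essentially the same route as the paper: both start from $\ee[(W'-W)^2\mid U]=\ee[(W')^2\mid U]-2W\,\ee[W'\mid U]+W^2$, invoke part~3 of Lemma~\ref{spectral}, and then expand $\Delta(p_1 p_1\overline{p_1}\,\overline{p_1})$ via Lemma~\ref{product} together with Lemmas~\ref{one} and~\ref{nab}. The only cosmetic difference is that you organize the computation around the identity $\Delta(W^2)-2W\,\Delta W=2\,\nabla W\cdot\nabla W$, so the single-factor Laplacian terms cancel before you evaluate anything, whereas the paper evaluates $\Delta(p_{1,1}\overline{p_{1,1}})$ in full and then cancels against $-2W\,\ee[W'\mid U]$ taken from Lemma~\ref{Ucond1}; the underlying arithmetic is identical.
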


\begin{proof} Clearly
\[ \ee[(W'-W)^2|U] = \ee[(W')^2|U] - 2W \ee[W'|U] + W^2.\]

By part 3 of Lemma \ref{spectral},
\[ \ee[(W')^2|U]  =  W^2 + t \Delta [p_{1,1} \overline{p_{1,1}}] + O(t^2) .\]
Using Lemma \ref{product}, and then Lemmas \ref{one} and \ref{nab}, one computes that
\begin{eqnarray*}
& & \Delta [p_{1,1} \overline{p_{1,1}}] \\
& = & p_{1,1} \overline{p_{1,1}} \left[ \frac{2 \Delta p_1}{p_1} + \frac{2 \Delta \overline{p_1}}{\overline{p_1}}
+ \frac{2 \nabla p_1 \cdot \nabla p_1}{p_{1,1}} + \frac{2 \nabla \overline{p_1} \cdot \nabla \overline{p_1}}{\overline{p_{1,1}}}
+ \frac{8 \nabla p_1 \cdot \nabla{\overline{p_1}}}{p_1 \overline{p_1}} \right] \\
& = & -4n p_{1,1} \overline{p_{1,1}} - 2p_2 \overline{p_{1,1}} - 2 \overline{p_2} p_{1,1} + 8n p_1 \overline{p_1}.
\end{eqnarray*} Thus \[ \ee[(W')^2|U] = W^2 + t \left[ -4n p_{1,1} \overline{p_{1,1}} - 2p_2 \overline{p_{1,1}} - 2 \overline{p_2} p_{1,1} + 8n p_1 \overline{p_1} \right] + O(t^2) .\]

By Lemma \ref{Ucond1},
\[ -2 W \ee[W'|U] = -2W^2 + t[-4nW + 4nW^2] + O(t^2).\]
Thus \[ \ee[(W')^2|U] - 2W \ee[W'|U] + W^2 = t[ -2 p_2 \overline{p_{1,1}} - 2 \overline{p_2} p_{1,1} + 4nW] + O(t^2).\]
\end{proof}

Next we compute expected values of low order moments of $W'-W$.

\begin{lemma} \label{Ulow} Suppose that $n \geq 8$. Then
\begin{enumerate}
\item $\ee(W'-W)^2 = 4nt + O(t^2)$.
\item $\ee(W'-W)^4 = O(t^2)$.
\item $\ee|W'-W|^3 = O(t^{3/2})$.
\end{enumerate}
\end{lemma}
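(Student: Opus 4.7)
The plan is to handle the three parts separately, with part (2) being by far the main effort. For part (1), I would simply take the unconditional expectation of the formula in Lemma \ref{Ucond2}. By Lemma \ref{mom} applied with $n\geq 8$, the index vectors $\vec a=(0,1)$ and $\vec b=(2,0)$ of $\IE[p_2\overline{p_{1,1}}]$ are unequal (total $3\leq n$), so that expectation vanishes, and likewise for its conjugate; since $\IE W=1$, this immediately yields $\IE(W'-W)^2 = 4nt+O(t^2)$.

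For part (2), my approach is to expand $(W'-W)^4$ by the binomial theorem and compute each moment $\IE[(W')^k W^{4-k}]$ via part~3 of Lemma \ref{spectral}. Applied to the smooth function $\phi=W^k$ for $k=1,\dots,4$, that lemma gives $\IE[(W')^k\mid U] = W^k + t\,\Delta(W^k) + O(t^2)$ uniformly in $U$, and the $t^0$ contribution to $\IE(W'-W)^4$ telescopes to $\IE W^4\sum_k\binom{4}{k}(-1)^{4-k}=0$. The remaining task is to verify that the $t^1$ coefficient
\ba{
\sum_{k=0}^{4}\binom{4}{k}(-1)^{4-k}\IE[W^{4-k}\Delta W^k]
}
also vanishes. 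The $k=0$ and $k=4$ summands drop out since $\IE[\Delta f]=\IE[f\Delta 1]=0$ by self-adjointness of $\Delta$ against Haar measure, and using the same self-adjointness to rewrite $\IE[W\Delta W^3] = \IE[W^3\Delta W]$, the remainder collapses to $-8\IE[W^3\Delta W] + 6\IE[W^2\Delta W^2]$.

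The two remaining expectations I would compute using Lemmas \ref{one}, \ref{nab}, and \ref{product}, which yield
\ba{
\Delta W = -2nW + 2n, \qquad \Delta W^2 = -4nW^2 + 8nW - 2p_{1,1}\overline{p_2} - 2\overline{p_{1,1}}p_2.
}
Invoking Lemma \ref{mom} once more with $n\geq 8$ gives $\IE W^3=6$, $\IE W^4=24$, and that both $\IE[W^2 p_{1,1}\overline{p_2}]$ and $\IE[W^2\overline{p_{1,1}}p_2]$ vanish (their index vectors being unequal with total $7\leq n$). Substituting produces $\IE[W^3\Delta W]=-36n$ and $\IE[W^2\Delta W^2]=-48n$, so the $t^1$ coefficient equals $288n - 288n = 0$, giving $\IE(W'-W)^4 = O(t^2)$.

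Part (3) follows immediately from Cauchy--Schwarz: $\IE|W'-W|^3 \leq (\IE(W'-W)^2)^{1/2}(\IE(W'-W)^4)^{1/2} = O(t^{1/2})\cdot O(t) = O(t^{3/2})$. The main obstacle is the delicate cancellation in part (2): the vanishing of the linear-in-$t$ contribution to $\IE(W'-W)^4$ is not at all evident a priori, and verifying it requires both careful bookkeeping in the binomial expansion (simplified through self-adjointness of $\Delta$) and an exact application of Rains's $p_\lambda$-formulas together with Diaconis--Shahshahani's moment evaluation.
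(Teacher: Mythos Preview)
Your proof is correct and follows essentially the same path as the paper's. Parts (1) and (3) are identical to the paper. For part (2), the paper reduces the five binomial terms to three via exchangeability of $(W,W')$, obtaining $\IE(W'-W)^4 = 2\IE W^4 - 8\IE[W^3\IE(W'|U)] + 6\IE[W^2\IE((W')^2|U)]$, and then plugs in Lemmas~\ref{Ucond1} and~\ref{Ucond2} directly; you instead reduce via self-adjointness of $\Delta$ and recompute $\Delta W$ and $\Delta W^2$ from scratch. These two reductions are equivalent (the exchangeability of the heat-kernel pair is precisely self-adjointness of $\Delta$), and both routes land on the same arithmetic $-8(-36n)+6(-48n)=0$, so there is no substantive difference.
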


\begin{proof} Lemma \ref{Ucond2} implies that \[ \ee(W'-W)^2 = t \ee \left[ -2 p_2 \overline{p_{1,1}} - 2 \overline{p_2} p_{1,1} + 4nW \right] + O(t^2).\] By Lemma \ref{mom}, $\ee[p_2 \overline{p_{1,1}}] = 0$, $\ee[\overline{p_2} p_{1,1}] = 0$, and $\ee[W]=1$; the first part of the lemma follows.

For part 2, first note that since
\[ \ee[(W'-W)^4] = \ee(W^4) - 4 \ee(W^3W') + 6 \ee[W^2(W')^2] - 4 \ee[W (W')^3] + \ee[(W')^4],\]
exchangeability of $(W,W')$ gives that
\begin{eqnarray*}
\ee(W'-W)^4 & = & 2 \ee(W^4) -8 \ee(W^3W') + 6 \ee[W^2(W')^2] \\
& = & 2 \ee(W^4) -8 \ee[W^3 \ee[W'|U]] + 6 \ee[W^2 \ee[(W')^2|U]].
\end{eqnarray*}

By Lemma \ref{Ucond1},
\begin{eqnarray*}
-8 \ee[W^3 \ee[W'|U]] & = & -8 \ee[W^3 (W + t(2n-2nW) + O(t^2))] \\
& = & - 8 \ee(W^4) + t \ee[ -16nW^3 + 16nW^4 ] + O(t^2) \\
& = & - 8 \ee(W^4) +tn [ -16(6) + 16(24) ] + O(t^2) \\
& = & - 8 \ee(W^4) + 288 tn + O(t^2),
\end{eqnarray*} where the penultimate equality used Lemma \ref{mom}.

By the proof of Lemma \ref{Ucond2}, and then Lemma \ref{mom},
\begin{eqnarray*} & & 6 \ee[W^2 \ee[(W')^2|U]] \\
& = & 6 \ee \left[ W^2 \left( W^2+t[-4nW^2 - 2p_2 \overline{p_{1,1}} - 2 \overline{p_2} p_{1,1} +8nW] +O(t^2) \right) \right] \\
& = & 6 \ee[W^4] +tn \ee[ -24 W^4 + 48 W^3] + O(t^2) \\
& = & 6 \ee[W^4] + tn [ -24(24) + 48(6)] + O(t^2) \\
& = & 6 \ee[W^4] - 288 tn + O(t^2).
\end{eqnarray*}

Summarizing, it follows that
\begin{eqnarray*}
\ee(W'-W)^4 & = & 2 \ee(W^4) -8 \ee[W^3 \ee[W'|U]] + 6 \ee[W^2 \ee[(W')^2|U]] \\
& = & O(t^2), \end{eqnarray*} proving part 2 of the lemma.

For part 3 of the lemma, one uses the Cauchy-Schwarz inequality to obtain that
\[ \ee|W'-W|^3 \leq \sqrt{\ee(W'-W)^2 \ee(W'-W)^4}.\] Part 3 then follows from parts 1 and 2 of the lemma.
\end{proof}

Now we proceed to the proof of Theorem \ref{main2}.

\begin{proof}[Proof of Theorem \ref{main2}] By Lemma \ref{Ucond1}, one can apply Theorem \ref{main} with $a=2nt$. By Lemma \ref{Ucond2}, and the
triangle inequality,
\begin{eqnarray*}
& & \ee \left|W - \frac{\ee[(W'-W)^2|W]}{2a} \right| \\
& = & \ee \left|W - \frac{t[-2p_2 \overline{p_{1,1}} - 2 \overline{p_2}p_{1,1} + 4nW]}{4nt} + \frac{O(t^2)}{4nt} \right| \\
& = & \frac{1}{2n} \ee |p_2 \overline{p_{1,1}} + \overline{p_2} p_{1,1}| + O(t) \\
& \leq & \frac{1}{2n} \sqrt{\ee(p_{2,2} \overline{p_{1,1,1,1}} + 2p_{2,1,1} \overline{p_{2,1,1}} + \overline{p_{2,2}} p_{1,1,1,1})}+O(t).
\end{eqnarray*} By Lemma \ref{mom}, this is $\frac{\sqrt{2}}{n} + O(t)$; letting $t \rightarrow 0$ gives an upper bound
\[ \ee \left|W - \frac{\ee[(W'-W)^2|W]}{2a} \right| \leq \frac{\sqrt{2}}{n}.\]

The second term in Theorem \ref{main} is 0 since by Lemma \ref{mom}, $\ee(W)=1$.

To bound $\frac{\ee|W'-W|^3}{a}$, note by Lemma \ref{Ulow} that $\ee|W'-W|^3 = O(t^{3/2})$. Since $a=2nt$, the term
$\frac{\ee|W'-W|^3}{a}$ tends to 0 as $t \rightarrow 0$.

Finally, note from Lemma \ref{Ucond1} that $R=O(t^2)$. Since $a=4nt$, it follows that
\[\frac{ \ee|R|}{a} \leq \frac{ \sqrt{\ee(R(W)^2)}}{a} = O(t) \] tends to 0 as $t \rightarrow 0$.

Summarizing, by letting $t \rightarrow 0$, Theorem~\ref{main} implies that for $\delta>0$,
\ba{
d_K(W,Z)\leq\frac{8 \sqrt{2}}{\delta n}+\delta/2.
}
Choosing $\delta=4\cdot2^{1/4}/\sqrt{n}$, yields the claimed result.
\end{proof}

{\it Remarks:}
\begin{enumerate}
\item The moments of the random variable $|Tr(U)|^2$ have a combinatorial interpretation. Indeed, from \cite{R2}
one has for all positive integers $l,n$ that
\[ \pp(L_n \leq l) = \frac{1}{n!} \int_{U(l,\mathbb{C})} |Tr(U)|^{2n} .\] Here $L_n$ is the length of the
longest subsequence of a random permutation on $n$ symbols.

\item The technique used in this section can be used to prove that for positive integers $k$,
$|Tr(U^k)|^2/k$ tends to an exponential with mean 1, for $U$ a Haar distributed unitary matrix from
$U(n,\mathbb{C})$, as $n \rightarrow \infty$. The bookkeeping is quite tedious, so we do not carry this out.
\end{enumerate}

\section*{Acknowledgements} Fulman was partially supported by a Simons Foundation Fellowship. We thank Eric Rains for
helpful correspondence.


\begin{thebibliography}{99}

\bibitem{CFR} Chatterjee, S., Fulman, J., and R\"{o}llin, A., Exponential approximation by Stein's method and spectral graph theory, {\it
ALEA Lat. Am. J. Probab. Math. Stat.} {\bf 8} (2011), 197-223.

\bibitem{CS} Chatterjee, S. and Shao, Q., Nonnormal approximation by Stein's method of exchangeable pairs with application to the Curie-Weiss model, {\it Ann. Appl. Probab.} {\bf 21} (2011), 464-483.

\bibitem{CGS} Chen, L. H. Y., Goldstein, L., and Shao, Q., Normal approximation by Stein's method. Probability and its Applications (New York).
Springer, Heidelberg, 2011.

\bibitem {CD} Coram, M. and Diaconis, P., New tests of the correspondence between unitary eigenvalues and the zeros of Riemann's zeta function, {\it
J. Phys. A} {\bf 36} (2003), 2883-2906.

\bibitem{DS} Diaconis, P. and Shahshahani, M., On the eigenvalues of random matrices. Studies in applied probability. {\it J. Appl. Probab.} {\bf 31A} (1994), 49-62.

\bibitem{DS2} Diaconis, P. and Shahshahani, M., Time to reach stationarity in the Bernoulli-Laplace diffusion model, {\it SIAM J. Math. Anal.} {\bf 18} (1987), 208-218.

\bibitem{DoSt} D\"{o}bler, C. and Stolz, M., Stein's method and the multivariate CLT for traces of powers on the classical compact groups, {\it Electron. J. Probab.} {\bf 16} (2011), 2375-2405.

\bibitem{F1} Fulman, J., Stein's method and random character ratios, {\it Trans. Amer. Math. Soc.} {\bf 360} (2008), 3687-3730.

\bibitem{F2} Fulman, J., Stein's method and characters of compact Lie groups, {\it Comm. Math. Phys.} {\bf 288} (2009), 1181-1201.

\bibitem{F3} Fulman, J., Stein's method, heat kernel, and traces of powers of elements of compact Lie groups, arXiv:1005.1306 (2010).

\bibitem{FR} Fulman, J. and R\"{o}llin, A., Stein's method, heat kernel, and linear functions on the orthogonal groups, arXiv:1109.2975 (2011).

\bibitem{G} Grigor'yan, A., Heat kernel and analysis on manifolds, AMS/IP Studies in Advanced Mathematics, 47. American Mathematical Society, Providence, RI; International Press, Boston, MA, 2009.

\bibitem{Ho} Hora, A., Central limit theorems and asymptotic spectral analysis on large graphs, {\it Infin. Dimens. Anal. Quantum Probab. Relat. Top.} {\bf 1} (1998), 221-246.

\bibitem{J} Johansson, K., On random matrices from the compact classical groups, {\it Ann. of Math.} {\bf 145} (1997), 519-545.

\bibitem{L} Levy, T., Schur-Weyl duality and the heat kernel measure on the unitary group, {\it Adv. Math.} {\bf 218} (2008), 537-575.

\bibitem{Liu} Liu, K., Heat kernels, symplectic geometry, moduli spaces and finite groups, in {\it Surveys in differential geometry: differential
 geometry inspired by string theory}, 527-542, Surv. Differ. Geom., 5, Int. Press, Boston, MA, 1999.

\bibitem{Lu} Luk, H.M., Stein's method for the gamma distribution and related statistical applications, Ph.D. thesis, University of Southern California, 1994.

\bibitem{PR1} Pek\"{o}z, E. and R\"{o}llin, A., Exponential approximation for the nearly critical Galton-Watson process and occupation times of Markov chains, {\it Electron. J. Probab.} {\bf 16} (2011), 1381-1393.

\bibitem{PR2} Pek\"{o}z, E. and R\"{o}llin, A., New rates for exponential approximation and the theorems of R\'{e}nyi and Yaglom, {\it Ann. Probab.} {\bf 39} (2011), 587-608.

\bibitem{R} Rains, E., Combinatorial properties of Brownian motion on the compact classical groups, {\it
J. Theoret. Probab.} {\bf 10} (1997), 659-679.

\bibitem{R2} Rains, E., Increasing subsequences and the classical groups, {\it Electron. J. Combin.} {\bf 5} (1998), Research Paper 12, 9 pp. (electronic).

\bibitem{Re} Reinert, G., Three general approaches to Stein's method, in {\it An introduction to Stein's method}, Lecture Notes Series, Institute for Mathematical Sciences, National University of Singapore, Volume 4 (1994), 183-221.

\bibitem{RR} Rinott, Y. and Rotar, V., On coupling constructions and rates in the CLT for dependent summands with applications to the antivoter model and weighted U-statistics, {\it Ann. Appl. Probab.} {\bf 7} (1997), 1080-1105.

\bibitem{RR2} Rinott, Y. and Rotar, V., Normal approximations by Stein's method, {\it Decis. Econ. Finance} {\bf 23} (2000), 15-29.

\bibitem{Ro} Rosenberg, S., The Laplacian on a Riemannian manifold. An introduction to analysis on manifolds.
 London Mathematical Society Student Texts, 31. Cambridge University Press, Cambridge, 1997.

\bibitem{Ros} Ross, N., Fundamentals of Stein's method, {\it Probability Surveys} {\bf 8} (2011), 210-293 (electronic).

\bibitem{Stn} Stein, C., Approximate computation of expectations. Institute of Mathematical Statistics Lecture Notes-Monograph Series, 7.
Institute of Mathematical Statistics, Hayward, CA, 1986.

\end{thebibliography}
\end{document}